\newtheorem*{theorem*}{Theorem}
\newtheorem{theorem}{Theorem}[section]
\newtheorem{proposition}[theorem]{Proposition}
\newtheorem{lemma}[theorem]{Lemma}
\theoremstyle{definition}
\newtheorem{definition}[theorem]{Definition}
\theoremstyle{remark}
\numberwithin{equation}{section}
\def\N{{\mathbb N}}
\def\f{{\mathscr{F}}}
\def\g{{\mathscr{G}}}
\def\u{{\mathscr{U}}}
\begin{document}
	\begin{Large}
		
		\title[Topological $(\f,\g)-$shadowing property]{Topological $(\f,\g)-$shadowing property}
		\author[S. H. Joshi]{Shital H. Joshi}
		\address{Department of Mathematics, Shree M. P. Shah Arts and Science College, Surendranagar, India\\
			Department of Mathematics, Faculty of Science, The Maharaja Sayajirao University of Baroda, Vadodara, India}
		\email{shjoshi11@gmail.com}
		\author[E. Shah]{Ekta Shah}
		\address{Department of Mathematics, Faculty of Science, The Maharaja Sayajirao University of Baroda, Vadodara, India}
		\email{shah.ekta-math@msubaroda.ac.in}

		\subjclass[2020]{Primary 37B05, 37B65, 37B99}
		
		\keywords{Uniform space, $(\f,\g)-$shadowing.}

		\begin{abstract}
			\noindent We define the concept of $(\f,\g)-$shadowing property on uniform space and say it as a topological $(\f,\g)-$shadowing property. We show that topological shadowing, topological $(\mathscr{D},\f_t)-$shadowing, topological $(\f_t,\f_t)-$shadowing and topological $(\N,\f_t)-$shadowing are equivalent in compact chain recurrent dynamical system. We also prove that if minimal points of $f$ are not dense in $X$ then the system does not have $(\mathscr{P}(\N),\f_{ps})-$shadowing.  
		\end{abstract}
		\maketitle
		
		\section{Introduction}
		
		\noindent The shadowing property is one of the important property in the theory of discrete dynamical systems as it is close to stability of the system and also to the chaotic behavior of the system. Recently, many generalizations and variations of shadowing property are studied. One such variation is $(\f,\g)-$shadowing property defined by Oprocha \cite{Oprocha}, where $\f$ and $\g$ are families of subsets of natural numbers. In \cite{ES}, authors studied, the interrelationship between various forms of $(\f,\g)-$shadowing property. 
		
		\noindent These days the study of such types of variations and generalizations on uniform spaces has became interesting.  Ahmadi et al. introduced and studied the topological concepts of expansivity and shadowing property for dynamical systems on noncompact nonmetrizable spaces, which generalize the relevant concepts for metric spaces \cite{SA2}. In \cite{SA1}, Ahmadi introduced and studied the topological ergodic shadowing and proved that a dynamical system with topological ergodic shadowing property is topologically chain transitive, and that topological chain transitivity together with topological shadowing property implies topological ergodic shadowing. Xunxung et al. \cite{WX} proved that every dynamical system defined on a Hausdorff uniform space with topologically ergodic shadowing is topologically mixing, thus topologically chain mixing. In \cite{Das1}, Das et al. defined topological $\underline{d}-$shadowing for a continuous map on a uniform space and show that it is equivalent to topological ergodic shadowing for a uniformly continuous map with topological shadowing on a totally bounded uniform space.  In \cite{SA4}, Ahmadi et al. proved that a surjective equicontinuous map on a compact uniform space has shadowing property if and only if the space is totally disconnected. Akoijam et al. proved that an expansive homeomorphism defined on compact uniform space has topological shadowing property then it has the periodic shadowing \cite{KBM}.
		
		\noindent In Section 1, we discuss the basics required for the content of the paper. The notion of $(\f,\g)-$shadowing property for metric spaces was introduced by Oprocha in \cite{Oprocha}. In Section 2, we define and study $(\f,\g)-$shadowing property on uniform spaces and name it as topological $(\f,\g)-$shadowing property. We then prove some Lammas to establish equivalence between topological shadowing, topological $(\mathscr{D},\f_t)-$shadowing, topological $(\f_t,\f_t)-$shadowing and topological $(\N,\f_t)-$shadowing for chain recurrent compact dynamical system. Finally, in Section 3, we define the syndetically proximal relation for the uniform spaces and prove that if the minimal points of $f$ are not dense in $X$, then $f$ does not exhibit topological $(\mathscr{P}(\N),\f_{ps})-$shadowing.
		
		\section{Preliminaries}
		\subsection{Subsets of $\N$}
		\noindent An infinite subset $A$  of $\N$ is said to be a \emph{co--finite} if $\N \setminus A$ is finite,
		\emph{thick} if $A$ contains arbitrary large blocks of consecutive numbers, \emph{syndetic} if  $A$ is infinite and there exists $M\in \N$ such that the gap between two consecutive integers in $A$ is bounded by $M$, \emph{piecewise syndetic} if $A=T\cap S$ where $T$ and $S$ are thick and syndetic subsets of $\mathbb{N}$ respectively and \emph{thickly syndetic} if for every $n\in \mathbb{N}$ the set of positions where length $n$ blocks begin forms a syndetic set.  A \emph{family} of subsets of $\N$ is any subset $\mathscr{F}$ of $\mathscr{P}(\N)$, the power set of $\N$, which is upward hereditary, i.e., if $B\in \mathscr{F}$ and $B\subset C\subset \N$ then $C\in \mathscr{F}$. The families of co--finite, thick, syndetic, piecewise syndetic and thickly syndetic subsets of $\N$ are denoted by $\mathscr{F}_{cf}$, $\mathscr{F}_t$, $\mathscr{F}_s$, $\mathscr{F}_{ps}$ and $\mathscr{F}_{ts}$ respectively. 
		
		\noindent The \emph{lower density} of $A\subset \N$ is denoted by $\underline{d}(A)$ and is defined by $$\underline{d}(A)=\liminf_{n\to \infty}\frac{|A\cap\{0,1,\dots,n-1\}|}{n}.$$ If we replace $\liminf$ with $\limsup$ in the above formula, we get $\overline{d}(A)$, the \emph{upper density} of $A$. If $\underline{d}(A)=\overline{d}(A)$, then the common value is said to be \emph{density} of $A$. The family of subsets of $\N$ with density $1$ is denoted by $\mathscr{D}$. The dual of a family $\mathscr{F}$ of subsets of $\N$ is denoted by $\mathscr{F}^*$ and is given by $\mathscr{F}^*=\{A\subset \N:A\cap B\neq \phi \mbox{ for all }B\in \mathscr{F}\}$ \cite{Oprocha}.

		\subsection{$(\f,\g)-$shadowing in metric space}
		\noindent Let $f:X\longrightarrow X$ be a continuous map defined on a metric space.	Fix $A\subset \mathbb{N}$ and $\delta$, $\epsilon$ $>0$. A sequence $\{x_i\}_{i=1}^{\infty}$ is a \emph{$\delta-$pseudo orbit on $A$} if $A\subset \{i:d(f(x_i),x_{i+1})<\delta\}.$ If $A=\N$, then $\{x_i\}_{i=1}^{\infty}$ is a $\delta$-pseudo orbit in classical sense. A point $y\in X$ is said to \emph{$\epsilon-$shadow (or $\epsilon-$traces)} a sequence $\{y_i\}_{i=1}^{\infty}$ on $B$ if $B\subset \{i:d(f^i(y),y_i)<\epsilon\}$. If $B=\mathbb{N}$ then $\{y_i\}_{i}^{\infty}$ is $\epsilon-$shadowed (or $\epsilon-$traced) by a point $y$. The notion of $(\f,\mathscr{G})-$shadowing was first introduced by Oprocha in \cite{Oprocha}. We recall the definition. 
		
		\smallskip
		\begin{definition}
			Consider families $\mathscr{F}$ and $\mathscr{G}$ of $\mathbb{N}$. A map $f$ is said to have \emph{$(\mathscr{F},\mathscr{G})-$shadowing property} if for every $\epsilon>0$ there is $\delta>0$ such that if $\{x_i\}_{i=1}^{\infty}$ is a $\delta-$pseudo orbit on a set $A$ in $\mathscr{F}$ then there is a point $x$ which $\epsilon-$shadows $\{x_i\}_{i=1}^{\infty}$ on a set $B$, for some $B\in \mathscr{G}$. 
		\end{definition}
		
		\noindent The notion of syndetically proximal pairs defined by Moothathu in \cite{Moothathu}. We recall the definition. \medskip 
		
		\noindent Let $X$ be a compact metric space and $f:X\to X$ be a continuous map. A pair $(x,y)\in X\times X$ is called a \emph{syndetically proximal pair} if for every $\epsilon>0$, the set $\{n\in \N:d\left(f^n(x),f^n(y)\right)<\epsilon\}$ is syndetic. 
		
		\subsection{Uniform spaces}
		\noindent Let $X$ be a non--empty set. The set $\Delta=\{(x,x):x\in X\}$ is called the \emph{diagonal} of $X$. Let $U$ and $V$ be subsets of $X\times X$. The \emph{composite of $U$ and $V$} is denoted by $U\circ V$ and is defined as $$U\circ V=\{(x,y)\in X\times X:\mbox{there exists} \; z\in X \mbox{ satisfying } (x,z)\in U \mbox{ and }(z,y)\in V\}.$$ For $A\subset X$, the inverse of $A$ is denoted as $A^{-1}$ and is given by $A^{-1}=\{(x,y)\in X\times X:(y,x)\in A\}$. If $A=A^{-1}$ then we say that $A$ is symmetric.  
		\begin{definition}
			A \emph{uniformity} for a set $X$ is a non-empty family $\u$ of subsets of $X\times X$ such that 
			\begin{itemize}
				\item[(a)] if $U\in \u$ then $\Delta\subset \u$;
				
				\item[(b)] if $U\in \u$, then $U^{-1}\in \u$;
				
				\item[(c)] if  $U\in \u$, then there exists $V\in \u$ such that $V\circ V\subset U$; 
				\item[(d)] if $U,\;V\in \mathscr{U}$ then $U\cap V\in \u$; and 
				\item[(e)] if $U\in \u$ and $U\subset V\subset X\times X$ then $V\in \u$.   
			\end{itemize}
			The pair $(X,\u)$ is called a \emph{uniform space}. The elements of $\u$ are called \emph{entourages} of the uniformity.
		\end{definition}

		\medskip
		\noindent A subfamily $\mathscr{B}$ of a uniformity $\u$ is a \emph{base} for $\u$ if each member of $\u$ contains a member of $\mathscr{B}$. The \emph{topology $\tau$ of the uniformity $\u$}, or the \emph{uniform topology}, is the family of all subsets $T$ of $X$ such that for each $x$ in $T$ there is $U\in \u$ such that $U[x]\subset T$, where $U[x]=\{y\in X:(x,y)\in U\}$. The set $U[x]$ is called the \emph{cross section of $U$} at point $x\in X$. For $A\subset X$, the set $U[A]=\bigcup_{x\in A}U[x]$. It is easy observe that $\left(\bigcap_{U\in \mathscr{U}} U \right)[x]=\bigcap_{U\in \mathscr{U}} U[x].$ A uniform space $(X,\u)$ is \emph{Hausdorff} if and only if $\bigcap_{U\in \u}U=\Delta.$ It is known that a uniform space $(X,\mathscr{U})$ is said to be \emph{totally bounded} if for each $U\in \mathscr{U}$ there is a finite set $F\subset X$ such that $U[F]=\bigcup_{y\in F}(U[y])=X$. For entourage $V$, $V\cap V^{-1}$ is always symmetric. Therefore without loss of generality we can assume that for any entourage $U$ we can say that there exists a symmetric entourage $V$ such that $V\circ V\subset U$. Further, it can be observed that for given entourage $U$ and for given $n\in \N$, there exists a symmetric entourage $V$ such that $V^n=V\circ V\circ \dots \circ V \subset U$.
		
		\smallskip
		\noindent Let $(X,\u)$ and $(Y,\mathscr{V})$ be two uniform spaces. A map $f:X\longrightarrow Y$ is said to be \emph{uniformly continuous} if for each $V\in \mathscr{V}$, the set $\{(x,y):(f(x),f(y))\in V\}$ is a member of $\u$. Every continuous map on a compact uniform space to a uniform space is uniformly continuous \cite{Kelley}.
		
		\smallskip
		\noindent For a uniform space $(X,\mathscr{U})$ set $2^X=\{A\subseteq X:A \mbox{ is a non--empty compact set}\}$. The uniformity generated by the sets of the form $2^U=\{(A,B)\in 2^X\times 2^X:A\subset U[B] \mbox{ and }B\subset U[A]\}$, where $U\in \mathscr{U}$, on $2^X$ is denoted by $2^{\u}$. The topology induced by this uniformity is equivalent to Vietoris topology on $2^X$ \cite{Mi}. Michael proved that a uniform space $X$ is compact if and only of $2^X$ is compact \cite{Mi}.
	
		\subsection{Dynamical System}
		\medskip
		\noindent A dynamical system is a pair $(X,f)$, where $X$ is a compact uniform space and $f:X \longrightarrow X$ is a continuous onto map. A point $x\in X$ is called a  \emph{periodic  point} of $f$ with prime period $m$ if $m$  is the smallest positive integer satisfying $f^m(x)=x$. If $x$ is a periodic point of prime period $1$ then it is called a \emph{fixed point}. For $x\in X$ and $U,\;V\subset X$, let $N_f(x,U)=\{n\in \mathbb{N}:f^n(x)\in U\}$ and $N_f(U,V)=\{n\in \mathbb{N}:f^n(U)\cap V\neq \phi\}.$ A point $x\in X$ is said to be \emph{minimal} if $N_f(x,U)$ is syndetic for every open set $U$ containing $x$. The set of minimal points of $f$ is denoted by $M(f)$. A point $x\in X$ is said to be \emph{non--wandering} if $N_f(U,U)\neq \phi$ for every open set $U$ containing $x$. The set of all non--wandering points of $f$ is denoted by $\Omega(f)$.
		\medskip

		\medskip
		\noindent A map $f$ is said to be
		\emph{transitive} if $N_f(U,V)\neq \phi$ for any pair of non-empty open subset $U,\; V$ of $X$; \emph{totally transitive} if $f^n$ is transitive for any $n\in \mathbb{N}$; \emph{weakly mixing} if $f\times f$ is transitive; \emph{mixing} if $N_f(U,V)$ is co--finite for any pair of non-empty open subset $U,\; V$ of $X$. Note that, if $f$ is transitive then for any pair $D$ and $E$ of entourages and for any pair $x$, $y$ of points in $X$ there exists a positive integer $n\geq 1$ such that $f^n(D[x])\cap E[y]\neq \phi $ \cite{KBM}. Let $(X,f)$ and $(Y,g)$ be two dynamical systems. A surjective map $\pi:X\longrightarrow Y$ is said to be a \emph{factor map} if $\pi \circ f=g \circ \pi$.
		
		\medskip 
		\noindent The notion of shadowing property on uniform spaces was first defined in \cite{Das2}. We recall the definition. Let $(X,f)$ be a dynamical system. Let $D$ and $E$ be entourages. 
		A sequence $\{x_i\}_{i\in \mathbb{N}}$ is said to be \emph{$D-$pseudo orbit} if $\left(f(x_i),x_{i+1}\right)\in D$, for all $i\in \mathbb{N}$. A $D-$pseudo orbit $\{x_i\}_{i\in \mathbb{N}}$ is said to be \emph{$E-$traced} by $y$ if $\left(f^i(y),x_i\right)\in E$, for all $i\in \mathbb{N}$. 
		
		\begin{definition}
		A map $f$ is said to have \emph{topological shadowing} property if for every entourage $E$ there is an entourage $D$ such that every $D-$pseudo orbit is $E-$traced by some point in $X$.	
		\end{definition}	

		\medskip		
		\noindent Let $D$ be an entourage. A \emph{$D-$chain} is a finite set $\{x_0,x_1,\dots,x_n\}$ such that $\left(f(x_i),x_{i+1}\right)\in D$, for $0\leq i\leq n-1$. A map $f$ is said to be
		\emph{topologically chain transitive} if for every $x,y\in X$ and entourage $D$, there is a $D-$chain from $x$ to $y$ and
		\emph{topological chain mixing} if for every $x, y\in X$ and entourage $D$, there exists $N\in \N$ such that for any $n\geq N$, there are $D-$chains of length $n$ from $x$ to $y$ \cite{SA3}. A point $x\in X$ is called \emph{chain recurrent} if for every entourage $D$, there is a $D-$chain from $x$ to $x$. The set of all chain recurrent points of $f$ is denoted by $CR(f)$. For completion we give the proof of following Proposition as we could not find in literature.
		
		\begin{proposition}
			Let $(X,f)$ be a compact dynamical system. Suppose $f$ has topological shadowing. Then $\Omega(f)=CR(f)$. 
		\end{proposition}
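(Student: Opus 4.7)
The plan is to prove the two inclusions separately; one is standard and needs only continuity of $f$ together with non-wandering, while the other is where topological shadowing does the real work.

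For $\Omega(f)\subseteq CR(f)$, let $x\in\Omega(f)$ and let $D$ be any entourage. I would pick a symmetric entourage $V$ with $V\circ V\subset D$, then use uniform continuity of $f$ on the compact space $X$ to find a symmetric entourage $W\subset V$ such that $(a,b)\in W$ implies $(f(a),f(b))\in V$. Since $x$ is non-wandering and $W[x]$ is an open neighborhood of $x$, there exist $n\geq 1$ and $y$ with $(x,y)\in W$ and $(x,f^n(y))\in W$. The finite sequence $x,f(y),f^2(y),\dots,f^{n-1}(y),x$ is then the desired $D$-chain from $x$ to $x$, because $(f(x),f(y))\in V\subset D$, the intermediate pairs $(f(f^i(y)),f^{i+1}(y))$ lie in $\Delta\subset D$, and $(f(f^{n-1}(y)),x)=(f^n(y),x)\in W\subset D$.

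For the reverse inclusion $CR(f)\subseteq \Omega(f)$, let $x\in CR(f)$ and let $U$ be an open neighborhood of $x$. Pick a symmetric entourage $E$ with $E[x]\subset U$. Applying topological shadowing, I obtain an entourage $D$ such that every $D$-pseudo orbit is $E$-traced. Since $x$ is chain recurrent, there is a $D$-chain $x=x_0,x_1,\dots,x_n=x$. I would extend this chain periodically to an infinite sequence $\{z_i\}_{i\in\N}$ by setting $z_{kn+j}=x_j$ for $0\leq j<n$ and $k\geq 0$; the wrap-around transitions $(f(z_{kn+n-1}),z_{(k+1)n})=(f(x_{n-1}),x)\in D$ hold automatically because $x_n=x$, so $\{z_i\}$ is a genuine $D$-pseudo orbit. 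Let $y$ be a point that $E$-traces it. Then $(y,z_0)=(y,x)\in E$ and $(f^n(y),z_n)=(f^n(y),x)\in E$, so by symmetry of $E$ both $y$ and $f^n(y)$ lie in $E[x]\subset U$, witnessing $n\in N_f(U,U)$ with $n\geq 1$.

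The argument is not really obstructed anywhere, but the step requiring the most care is the construction of the infinite $D$-pseudo orbit from the finite $D$-chain and checking the wrap-around. The rest is a direct translation of the classical metric proof into the language of entourages, using only that $f$ is continuous on a compact space so that one may freely pass to a symmetric entourage $V$ with $V\circ V$ contained in any prescribed entourage.
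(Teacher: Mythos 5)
Your proof is correct and follows essentially the same route as the paper: the inclusion $\Omega(f)\subseteq CR(f)$ is obtained by uniform continuity and the chain $x,f(y),\dots,f^{n-1}(y),x$, and the reverse inclusion by periodically repeating the $D$-chain to get a $D$-pseudo orbit whose tracing point witnesses $N_f(U,U)\neq\phi$. The only cosmetic remark is that $W[x]$ need not be literally open (one should pass to an open neighborhood of $x$ contained in $W[x]$), but the paper's own argument uses $E'[x]$ in exactly the same way, so this is not a substantive difference.
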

		\begin{proof}
			Suppose $x\in \Omega(f)$. We show that $x\in CR(f)$. Let $E$ be an entourage. By uniform continuity of $f$, there exists a symmetric entourage $E'$ such that $E'^2\subset E$ satisfying $(p,q)\in E'$ implies $(f(p),f(q))\in E$. Since $x\in \Omega(f)$ it follows that there exists $n\in \N$ such that $f^n(E'[x])\cap E'[x]\neq \phi$. Let $y\in f^n(E'[x])\cap E'[x]$. Then there exists $z\in E'[x]$ such that $(x,f^n(z))\in E'$ and $(x,z)\in E'$. Consider a finite sequence $\{x,f(z), f^2(z),\dots, f^{n-1}(z),x\}$. Then it is an $E-$chain from $x$ to $x$. Hence, $x\in CR(f)$.
			
			\smallskip
			\noindent Suppose $x\in CR(f)$. We show that $x\in \Omega(f)$. Let $E$ be an entourage and $D$ obtain by topological shadowing of $f$. Since $x\in CR(f)$ it follows that there exists a $D-$chain $\{x=x_1,x_2,x_3,\dots,x_n=x\}$ from $x$ to $x$. Consider a sequence $\{z_i\}=\{x_1,x_2,\dots,x_{n-1},x_1,x_2,\dots\}$. Then $\{z_i\}$ is a $D-$pseudo orbit. Therefore there exists $z\in X$ such that $\{z_i\}$ is $E-$traced by $z$. Equivalently, $(f^i(z),z_i)\in E$, for all $i\in \N$. In particular, $(z,x)\in E$ and $(f^n(z),x)\in E$. Hence $f^n(E[x])\cap E[x]\neq \phi$. Thus, $x\in \Omega(f)$.
		\end{proof}

		\section{$(\f,\g)-$shadowing property}
		\noindent In this section we study the notion of $(\f,\g)-$shadowing on uniform space. Let $(X,f)$ be a dynamical system. Fix $A,\; B\subset \mathbb{N}$ and entourages $D$, $E$. A sequence $\{x_i\}_{i=0}^{\infty}$ is a \emph{$D-$pseudo orbit on $A$} if $A\subset \{i:(f(x_i),x_{i+1})\in D\}.$ A sequence $\{y_i\}_{i=0}^{\infty}$ is \emph{$E-$shadowed (or $E-$traced)} by $y\in X$ on $B$ if $B\subset \{i:(f^i(y),y_i)\in E\}$. 	
		\begin{definition}
			Consider families $\mathscr{F}$ and $\mathscr{G}$ of $\mathbb{N}$. A map $f$ is said to have \emph{topological $(\mathscr{F},\mathscr{G})-$shadowing property} if for every entourage $E$ there is an entourage $D$ such that if $\{x_i\}_{i=1}^{\infty}$ is a $D-$pseudo orbit on a set $A$, where $A \in \mathscr{F}$, then there is a point $x\in X$ such that $\{x_i\}_{i=1}^{\infty}$ is $E-$shadowed by $x$ on a set $B$, for some $B\in \mathscr{G}$. 
		\end{definition}

		\noindent The following Lemma is obvious from the definition of topological $(\f,\g)-$shadowing property.
		\begin{proposition}\label{L7}
			Suppose $\f_0$, $\f_1$, $\g_0$, $\g_1$ are families of subsets of $\N$ with $\f_1\subset \f_0$ and $\g_0\subset \g_1$. If $(X,f)$ has topological $(\f_0,\g_0)-$shadowing then it also has topological $(\f_1,\g_1)-$shadowing property.
		\end{proposition}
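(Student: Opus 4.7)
The plan is to unwind the definition of topological $(\f,\g)$-shadowing and observe that the hypothesis supplies exactly what is needed, so the same choice of entourage will work verbatim. First I would fix an arbitrary entourage $E$ and, invoking the assumption that $(X,f)$ has topological $(\f_0,\g_0)$-shadowing, produce an entourage $D$ with the property that every $D$-pseudo orbit on a set $A \in \f_0$ is $E$-traced on some set $B \in \g_0$. I claim that the same $D$ witnesses topological $(\f_1,\g_1)$-shadowing.

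To verify this, suppose $\{x_i\}_{i=0}^{\infty}$ is any $D$-pseudo orbit on a set $A \in \f_1$. Since $\f_1 \subset \f_0$, we have $A \in \f_0$, so $\{x_i\}$ is a $D$-pseudo orbit on a set of $\f_0$. By the choice of $D$, there exists $x \in X$ and a set $B \in \g_0$ such that $\{x_i\}$ is $E$-shadowed by $x$ on $B$. Because $\g_0 \subset \g_1$, this same set $B$ belongs to $\g_1$, so $x$ $E$-shadows $\{x_i\}$ on a member of $\g_1$. This is exactly the required conclusion for topological $(\f_1,\g_1)$-shadowing.

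There is no real obstacle here: the content of the statement is simply the monotonicity of $(\f,\g)$-shadowing in the first argument (downward) and the second argument (upward), and the proof is a direct bookkeeping exercise using set inclusion. The only thing to be careful about is matching quantifiers correctly, namely that we fix the same $D$ produced by the $(\f_0,\g_0)$-hypothesis and show it works without any modification, rather than attempting to change $D$ or $E$.
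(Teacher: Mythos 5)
Your proof is correct and is precisely the routine unwinding of the definition that the paper has in mind when it calls this proposition obvious (the paper gives no written proof): the same entourage $D$ works because $A\in\f_1\subset\f_0$ and the tracing set $B\in\g_0\subset\g_1$. Nothing further is needed.
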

		
		\noindent In the following we obtain equivalence of topological shadowing, topological $(\N,\f_t)-$shadowing and topological $(\f_t,\f_t)-$shadowing.
		
		\begin{theorem}\label{T2}
			Let $(X,f)$ be a dynamical system such that $f$ is chain-recurrent. Then the following are equivalent:
			\begin{itemize}
				\item [(1)] $f$ has topological shadowing.
				\item[(2)] $f$ has topological $(\mathscr{D},\f_t)-$shadowing.
				\item[(3)] $f$ has topological $(\f_t,\f_t)-$shadowing.
				\item[(4)] $f$ has topological $(\mathbb{N},\f_t)-$shadowing.
			\end{itemize}	
		\end{theorem}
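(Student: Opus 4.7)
The plan is to close the cycle $(1) \Rightarrow (3) \Rightarrow (2) \Rightarrow (4) \Rightarrow (1)$. The two middle implications $(3) \Rightarrow (2)$ and $(2) \Rightarrow (4)$ will both follow at once from Proposition~\ref{L7}, once we record the (family) inclusions $\N \subset \mathscr{D} \subset \f_t$. The only non-trivial containment is $\mathscr{D} \subset \f_t$: if $A \in \mathscr{D}$, then $\N \setminus A$ has density $0$ and therefore cannot contain a syndetic set, so the gaps of $A$ are unbounded and $A$ is thick.

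For $(1) \Rightarrow (3)$, suppose $\{x_i\}$ is a $D$-pseudo orbit on a thick set $A$, where $D$ is the entourage supplied by classical shadowing for the target entourage $E$. The plan is to enumerate a disjoint sequence of blocks $[a_n, a_n + n] \subset A$ with $a_n \to \infty$ and splice them into a genuine $D$-pseudo orbit $\{y_i\}$: on each block we set $y_i = x_i$, and between consecutive blocks the chain-recurrence hypothesis allows us to insert a $D$-chain from $x_{a_n + n}$ to $x_{a_{n+1}}$. Applying classical shadowing to $\{y_i\}$ then produces a point which $E$-traces $\{x_i\}$ on the thick set $\bigcup_n [a_n, a_n + n]$.

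The main obstacle is $(4) \Rightarrow (1)$. Given a classical $D$-pseudo orbit $\{x_i\}$, I would construct an auxiliary $D$-pseudo orbit $\{y_i\}$ that repeatedly restarts and explores longer and longer initial segments of $\{x_i\}$: in its $k$-th round, starting at position $P_k$, $\{y_i\}$ traces $x_0, x_1, \dots, x_{n_k}$ and then uses a $D$-chain (supplied by chain recurrence) to return to $x_0$, with $n_k \to \infty$. Applying $(4)$ to $\{y_i\}$ with a symmetric closed entourage $E_1 \subset E$ yields $z \in X$ that $E_1$-traces $\{y_i\}$ on some thick set $B$. Thickness of $B$ forces $B$ to contain blocks of every length, hence blocks long enough to cover complete rounds of $\{y_i\}$; for each such round $k$ covered by a block of $B$, the point $z_k := f^{P_k}(z)$ $E_1$-traces $x_0, \dots, x_{n_k}$. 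A short case analysis (separating the possibility that $B$ is eventually all of $\N$ from the possibility that blocks of $B$ can be found at arbitrarily late positions) shows the set of such $k$ is infinite with $n_k \to \infty$. A subsequential limit $z^{*} = \lim_j z_{k_j}$ in the compact space $X$, combined with closedness of $E_1$ and continuity of each $f^i$, then shows that $z^{*}$ $E$-traces $\{x_i\}$ on all of $\N$. The hardest technical point will be controlling the entourages through this limit step, which forces us to work with closed entourages and to shrink the target entourage in advance; it is also where chain recurrence is used in an essential way, through the existence of the $D$-chains that implement the restarts in $\{y_i\}$.
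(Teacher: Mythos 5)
Your overall cycle $(1)\Rightarrow(3)\Rightarrow(2)\Rightarrow(4)\Rightarrow(1)$ and the middle implications via Proposition~\ref{L7} and $\N\subset\mathscr{D}\subset\f_t$ coincide with the paper and are fine, but both substantive implications have genuine gaps. In $(1)\Rightarrow(3)$, your splicing step assumes that chain recurrence supplies a $D$-chain from $x_{a_n+n}$ to $x_{a_{n+1}}$; chain recurrence only gives chains from a point back to itself, and what you need is chain transitivity. By Lemma~\ref{6L-1}, $X=CR(f)$ decomposes into finitely many clopen $\sim_D$-classes, and in the gaps of the thick set $A$ the pseudo orbit may jump between classes (think of the identity on a two-point discrete space, with $a$-blocks and $b$-blocks alternating), so the connecting chain need not exist at all. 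Moreover, since you keep $y_i=x_i$ at the \emph{original} indices on the blocks, each inserted chain would have to have length exactly $a_{n+1}-(a_n+n)$; chains of a prescribed exact length need not exist (parity-type obstructions), and controlling this is precisely what the gcd statement (Lemma~\ref{L5}) and Lemmas~\ref{L1}, \ref{L2}, \ref{L6} are for --- this is how the paper proves $(1)\Rightarrow(3)$ (Theorem~\ref{T1}), without even assuming chain recurrence.

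In $(4)\Rightarrow(1)$ the alignment problem is also fatal as sketched: after applying $(4)$ to your single auxiliary orbit you only know that $B$ is thick, i.e.\ contains arbitrarily long blocks at uncontrolled positions, while your rounds have unbounded lengths; so every block of $B$ may sit strictly inside the middle of some very long round and never contain any interval $[P_k,P_k+n_k]$, nor even $[P_k,P_k+m]$ for a single $m$. Hence your case analysis does not yield infinitely many fully covered rounds, the points $z_k$ are not available, and the windows you do get start at indices tending to infinity, which cannot be shifted back since $f$ need not be invertible. The paper avoids any limit argument: by Lemma~\ref{L10} it suffices to trace finite $D$-chains, and for a finite chain $\{x_1,\dots,x_n\}$ one forms a \emph{periodic} auxiliary pseudo orbit of period $n+m$ using a return chain from $x_n$ to $x_1$; periodicity forces any sufficiently long block of the thick set $T$ to contain an aligned window $[k(n+m),k(n+m)+n]$, and then $f^{k(n+m)}(z)$ traces the chain outright. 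Note also that the existence of your return chains from $x_{n_k}$ to $x_0$ is not ``bare'' chain recurrence either: it uses that the chain lies in one $\sim_D$-class together with the symmetry of $\sim_D$ on $CR(f)=X$, which is the content of Lemma~\ref{6L-1} and should be invoked or proved.
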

		
		\noindent We prove the following Lemmas required for the proof of Theorem \ref{T2}.
		
		\begin{lemma}\label{L1}
			Let $(X,f)$ be a dynamical system. For given entourage $E$ there exists an entourage $D$ such that if $\{x_0,\dots,x_n\}$ is a $D-$chain from $x_0$ to $x_0$ then there exists an $E-$chain $\{y_0,\dots,y_n\}\subset CR(f)$ from $y_0$ to $y_0$ such that $(x_i,y_i)\in E$, for all $i$, $0\leq i\leq n$.
		\end{lemma}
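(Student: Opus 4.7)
My plan is to isolate the real content in a density sublemma and then bootstrap to the lemma by a triangle-type composition of entourages. Without loss of generality take $E$ symmetric; pick a symmetric $E_1\subset E$ with $E_1\circ E_1\circ E_1\subset E$, and by uniform continuity of $f$ a symmetric $E_2\subset E_1$ with $(p,q)\in E_2$ implying $(f(p),f(q))\in E_1$. The sublemma I want is: for every entourage $F$ there is an entourage $D_F$ such that whenever $z$ is the starting point of a closed $D_F$-chain there is $w\in CR(f)$ with $(w,z)\in F$.

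I would prove the sublemma by contradiction. If it fails for some symmetric $F$, then for every entourage $D$ we can pick a closed $D$-chain $w_0^D,w_1^D,\dots,w_{m(D)}^D=w_0^D$ with $z^D:=w_0^D\notin F[CR(f)]$. The net $(z^D)$, indexed by entourages under reverse inclusion, admits a convergent subnet $z^{D_\alpha}\to z^\ast$ by compactness of $X$. To verify $z^\ast\in CR(f)$, fix any entourage $G$, pick a symmetric $G'\subset G$ with $G'\circ G'\subset G$ and a symmetric $G''\subset G'$ with $(a,b)\in G''$ implying $(f(a),f(b))\in G'$; eventually $D_\alpha\subset G'$ and $(z^{D_\alpha},z^\ast)\in G''$. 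Replacing the repeated endpoint of the closed $D_\alpha$-chain by $z^\ast$ then yields a closed $G$-chain at $z^\ast$: the first link $(f(z^\ast),w_1^{D_\alpha})$ lies in $G'\circ G'\subset G$ since $(f(z^\ast),f(z^{D_\alpha}))\in G'$ and $(f(z^{D_\alpha}),w_1^{D_\alpha})\in D_\alpha\subset G'$, the interior links remain in $D_\alpha\subset G$, and the last link is controlled symmetrically. Hence $z^\ast\in CR(f)$, and net convergence gives $(z^\ast,z^{D_\alpha})\in F$ eventually, so $z^{D_\alpha}\in F[z^\ast]\subset F[CR(f)]$, contradicting the choice of $z^D$.

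With the sublemma in hand, apply it with $F=E_2$ to obtain $D_0$ and set $D:=D_0\cap E_1$. Given a closed $D$-chain $\{x_0,\dots,x_n=x_0\}$, each cyclic shift $x_i,x_{i+1},\dots,x_{n-1},x_0,\dots,x_i$ is a closed $D$-chain (hence a closed $D_0$-chain) starting at $x_i$, so the sublemma yields $y_i\in CR(f)$ with $(x_i,y_i)\in E_2$; set $y_n:=y_0$. Composing $(f(y_i),f(x_i))\in E_1$ (from $(x_i,y_i)\in E_2$ by uniform continuity and symmetry of $E_1$), $(f(x_i),x_{i+1})\in D\subset E_1$, and $(x_{i+1},y_{i+1})\in E_2\subset E_1$ gives $(f(y_i),y_{i+1})\in E_1\circ E_1\circ E_1\subset E$, so $\{y_0,\dots,y_n\}\subset CR(f)$ is the required closed $E$-chain with $(x_i,y_i)\in E$ for every $i$.

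The main obstacle is the sublemma itself: it requires a genuine net argument in the possibly non-metrizable uniform space $X$ and the careful entourage bookkeeping needed to produce, from the original closed $D_\alpha$-chain at $z^{D_\alpha}$, a closed $G$-chain at the limit point $z^\ast$ using only uniform continuity. Everything subsequent is routine triangle-style composition.
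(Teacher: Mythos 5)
Your argument is correct, but it reaches the lemma by a genuinely different decomposition than the paper. The paper runs a single contradiction argument in the hyperspace: it packages each offending closed $F$-chain into a compact set $C_F\in 2^X$, uses compactness of $2^X$ (Michael's theorem) to extract a limit set $C\subset CR(f)$, and then approximates \emph{all} points of a sufficiently fine bad chain simultaneously by points of $C$, which already form the desired $E'$-chain in $CR(f)$; a final endpoint adjustment closes the chain at $y_0$. You instead isolate a purely pointwise sublemma --- the starting point of a sufficiently fine closed chain lies within $F$ of $CR(f)$ --- proved by a net-compactness argument in $X$ itself (limit of bad starting points is chain recurrent), and then recover the whole-chain statement by applying the sublemma to every cyclic shift of the given closed $D$-chain and rebuilding the $E$-chain structure through the $E_1\circ E_1\circ E_1\subset E$ composition with uniform continuity. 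What your route buys is the avoidance of the hyperspace machinery altogether, plus an explicit verification (missing in the paper, which only asserts it) that a limit of points lying on arbitrarily fine closed chains is itself chain recurrent; what the paper's route buys is that the approximating points come from one fixed compact set $C\subset CR(f)$ and no cyclic-shift trick is needed. Two small bookkeeping remarks on your version: in the sublemma, when the closed $D_\alpha$-chain has length one the first and last links coincide, so you should take $G'$ with $G'\circ G'\circ G'\subset G$ (or traverse the chain twice) rather than $G'\circ G'\subset G$; and the step ``eventually $D_\alpha\subset G'$'' is exactly the cofinality property of subnets over the entourage filter, which is worth saying explicitly since $X$ need not be metrizable. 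Neither affects the validity of the argument.
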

		\begin{proof}
			Let $E$ be an entourage. Then we show that there exists an entourage $D$ such that if $\{x_0,\dots, x_n\}$ is a $D-$chain from $x_0$ to $x_0$ then there exists an $E'-$chain $\{y_0,\dots,y_n\}\subset CR(f)$ such that $(x_i,y_i)\in E'$ for all $i$, $1\leq i\leq n$, where $E'$ is a symmetric entourage such that $E'^3\subset E$.
			
			\noindent If possible suppose there is an entourage $E'$ such that for every entourage $F$ there exists $n_F>0$ and $F-$chain $\{x_0^{(F)},\dots,x_{n_F}^{(F)}\}$ from $x_0^{(F)}$ to $x_0^{(F)}$ satisfying for every $E'-$chain $\{y_0,\dots,y_{n_F}\}\subset CR(f)$ there is $i$ satisfying $0\leq i\leq n_F$, and $(x_i^{(F)},y_i)\notin E'$. Denote $$C_F=\{x_j^{(F)}:0\leq j \leq n_F\}.$$ Then $C_F\in 2^X$. Since $X$ is compact it follows that the space $2^X$ is compact. Therefore, the infinite set $\{C_F:F\in \mathscr{U}\}$ has a limit point $C$ (say) and there exists a sequence $\{C_{F_k}\}$ such that $\{C_{F_k}\}$  converges to $C$. Let $z\in C$. Then there is a sequence $\{z_{F_k}\}$ in $C_{F_k}$ such that $z_{F_k}$ converges to $z$. This implies that $z\in CR(f)$ and hence $C\subset CR(f)$.
			
			\medskip
			\noindent Let $E''$ be an entourage such that $E''^2\subset E'$. Then by uniform continuity of $F$ there is a symmetric entourage $D$ such that $D^2\subset E''$ and $(p,q)\in D$ implies $(f(p),f(q))\in E''$. Take $k>0$ such that $F_k\subset D$, for all $i=0,\dots,n_{F_k}$. Then there exists $y_i\in C$ such that $(x_i^{F_k},y_i)\in F_k\subset D\subset E'$. But the sequence $\{y_0,y_1,\dots,y_{n_{F_k}}\}\subset C\subset CR(f)$ is also an $E'-$chain.
			This is a contradiction with the choice of the sequence $\{x_0^{(F)},\dots, x_{n_F}^{(F)}\}$. Hence the claim. 
			
			\noindent If $\{x_0,\dots,x_n\}$ is a $D-$chain from $x_0$ to $x_0$, $\{y_0,\dots,y_n\}$ is an $E'-$chain and $(x_i,y_i)\in E'$, then $\left(f(y_{n-1}),y_0\right)\in E'^3\subset E$. Hence $\{y_0,\dots,y_{n-1},y_0\}$ is an $E-$chain from $y_0$ to $y_0$ and $(x_i,y_i)\in E$ for $i=0$ to $n$.
		\end{proof}
		
		\begin{lemma}\label{L2}
			Let $(X,f)$ be a dynamical system such that $f$ is uniformly continuous. For every entourage $E$ there is an entourage $D$ such that for every thick set $T$ and $D-$pseudo orbit $\{x_i\}_{i=1}^{\infty}$ on $T$ there exists a thick set $T'\subset T$ and an $E-$pseudo orbit $\{y_i\}_{i=1}^{\infty}\subset CR(f)$ on $T'$ such that $(x_i,y_i)\in E$, for every $i\in T'$.
		\end{lemma}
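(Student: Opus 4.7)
The strategy is to mirror Lemma~\ref{L1} by combining a pigeonhole closure argument with Lemma~\ref{L1} itself, so as to upgrade long $D$-chains extracted from blocks of $T$ into nearby $E$-chains inside $CR(f)$.

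Given entourage $E$, first pick a symmetric entourage $E'$ with $E'^{3}\subset E$, apply Lemma~\ref{L1} to $E'$ to obtain an entourage $D_{1}$, and use uniform continuity of $f$ to refine this to an entourage $D\subset D_{1}$ satisfying $(p,q)\in D\Rightarrow(f(p),f(q))\in E'$. Now consider an arbitrary $D$-pseudo orbit $\{x_{i}\}$ on a thick set $T$. Thickness of $T$ supplies blocks $B_{k}=[a_{k},a_{k}+n_{k}]\subset T$ with $n_{k}\to\infty$, and on each $B_{k}$ the restriction $(x_{a_{k}},\ldots,x_{a_{k}+n_{k}})$ is a genuine $D$-chain.

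On each $B_{k}$ with $n_{k}$ larger than the covering number of $X$ by $D_{1}$-balls, a pigeonhole argument produces indices $a_{k}\leq i_{1}<i_{2}\leq a_{k}+n_{k}$ with $(x_{i_{1}},x_{i_{2}})\in D_{1}\circ D_{1}^{-1}$, so that $(x_{i_{1}},x_{i_{1}+1},\ldots,x_{i_{2}-1},x_{i_{1}})$ is a $D_{1}$-chain from $x_{i_{1}}$ to itself. Lemma~\ref{L1} then produces an $E'$-chain $(y_{i_{1}},y_{i_{1}+1},\ldots,y_{i_{2}-1},y_{i_{1}})\subset CR(f)$ approximating this cycle within $E'$. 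Iterating inside each block and assembling across all blocks defines $y_{i}$ on a subset $T'\subset T$; for $i\notin T'$ we set $y_{i}=f(y_{i-1})$ whenever such a predecessor is already in $CR(f)$, using $f$-invariance of $CR(f)$, or otherwise choose $y_{i}\in CR(f)$ arbitrarily. The containment $E'^{3}\subset E$ combined with the refinement of $D$ through uniform continuity ensures that $\{y_{i}\}$ is an $E$-pseudo orbit on $T'$ and that $(x_{i},y_{i})\in E$ for every $i\in T'$.

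The main technical obstacle will be verifying thickness of $T'$. A single pigeonhole step produces cycles whose length is bounded in terms of the covering number of $X$ at the scale $D_{1}$, so a naive tiling of $B_{k}$ by such cycles would yield $T'$ only sub-blocks of bounded length. Overcoming this requires, for each target length $m$, choosing a block $B_{k}$ of $T$ much longer than $m$ and arranging that the pigeonhole pair $(i_{1},i_{2})$ actually satisfies $i_{2}-i_{1}\geq m$; this can be organised either by running the pigeonhole argument against a sub-cover calibrated to $m$, or by a compactness argument in $X^{\N}$ passing to a limit chain along longer and longer blocks, where the existence of long near-cycles inside the limit guarantees long near-cycles inside sufficiently long $B_{k}$.
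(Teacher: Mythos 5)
Your plan follows the same basic route as the paper's proof: use compactness (total boundedness) to produce a finite cover of $X$, pigeonhole inside the long blocks of the thick set $T$ to extract long loops, push these loops into $CR(f)$ via Lemma~\ref{L1}, and let $T'$ be the union of the corresponding intervals, filling the remaining indices with points of $CR(f)$. The one move the paper makes that you do not is to \emph{discretize} the pseudo orbit first: each $x_i$ is replaced by a fixed representative $z_i=p_{j(i)}$ of the ball containing it, at the cost of one extra entourage (uniform continuity makes $\{z_i\}$ a pseudo orbit on $T$ for a slightly larger entourage). Since the $z_i$ then take only finitely many values, the returns are exact equalities $z_{a_k}=z_{b_k}=p_l$ rather than approximate returns, and in a block of length $n$ the most frequent representative already spans a sub-block of length roughly $n/s$ (where $s$ is the size of the cover), so the lengths $b_k-a_k$ tend to infinity automatically. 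This single device removes both places where your plan is loose.

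Concretely, two points in your write-up need repair. First, the closed-up loop $(x_{i_1},x_{i_1+1},\dots,x_{i_2-1},x_{i_1})$ is not a $D_1$-chain as claimed: its last link only satisfies $(f(x_{i_2-1}),x_{i_1})\in D\circ D_1\circ D_1^{-1}$, so Lemma~\ref{L1} cannot be invoked with the entourage $D_1$ it was calibrated for. You must run the covering and pigeonhole at a finer symmetric scale $D_2$ with $D\circ D_2\circ D_2\subset D_1$; this is exactly the bookkeeping that the paper's chain of entourages $E_1, E_2, E_3, D, D_1$ performs. Second, the thickness of $T'$, which you rightly flag as the main obstacle, is settled by your first suggested device and needs no compactness argument in $X^{\N}$: given a target length $m$, take a block of $T$ of length greater than $(s+1)m$ and look at $x_a,x_{a+m},\dots,x_{a+sm}$; two of these lie in one ball of the cover, producing a near-cycle of length between $m$ and $sm$. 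Using one such interval per block, with targets $m_k\to\infty$, makes $T'$ thick. With these two adjustments your plan becomes a complete proof essentially equivalent to the paper's; the discretization step is simply the cleaner way to organize it.
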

		\begin{proof}
			Let $E_1$ be a symmetric entourage such that $E_1^2\subset E$, $E_2$ be provided for $E_1$ by Lemma \ref{L1} satisfying $E_2^2\subset E_1$ and $E_3$ be such that $E_3^2\subset E_2$. By uniform continuity of $f$ there is a symmetric entourage $D$ with $D^2\subset E_3$ such that if $(p,q)\in D$ then $(f(p),f(q))\in E_3$. Let $D_1$ be a symmetric entourage such that $D_1^2\subset D$. Since $X$ is compact, it is totally bounded. Therefore there is a finite set $S=\{y_1,\dots,y_s\}$ such that $$D_1[S]=\bigcup_{i=1}^{s} (D_1[y_i])=X.$$
			
			\noindent For every $i$, choose a point $p_i\in D_1[y_i]$. Let $T$ be a thick set and let $\{x_i\}_{i=1}^{\infty}$ be a $D-$pseudo orbit on $T$. For every $i\geq 1$, fix an index $j(i)$ such that $x_i\in D_1[y_{j(i)}]$ and denote $z_i=p_{j(i)}$. Then $(z_i,x_i)\in D_1^2\subset D$, for every $i\in \N$. Further, uniform continuity of $f$ implies $(f(z_i),f(x_i))\in E_3$ and therefore $\left(f(z_i),z_{i+1}\right)\in E_2$, for all $i\in T$, as $E_3^2\subset E_2$. Hence $\{z_i\}_{i=1}^{\infty}$ is an $E_2-$pseudo orbit on $T$ and $\{z_i:i\in \mathbb{N}\}\subset \{p_i:1\leq i\leq s\}$. Next, thickness of $T$ implies there exists an integer $l$, $1\leq l\leq s$, and two increasing sequences of integers $a_k<b_k<a_{k+1}-1$ such that $\lim_{k\to \infty}b_k-a_k=\infty$ and $\{a_k,a_k+1,\dots,b_k\}\subset T$ and $z_{a_k}=z_{b_k}=p_l$, for every $k$. But this implies each sequence $\{z_{a_k},\dots,z_{b_k}\}$ is $E_2-$chain from $p_l$ to $p_l$. 
			By Lemma \ref{L1}, there exists an $E_1-$chain $\{y_{a_k},y_{a_k+1},\dots,y_{b_k}\}\subset CR(f)$ such that $(y_i,z_i)\in E_1$. Put $T'=\mathbb{N}\cap \bigcup_{k=1}^{\infty}[a_k,b_k]$ and fix any point $q\in CR(f)$. For $i\notin T'$, put $y_i=q$. Note that $[a_s,b_s]\cap[a_t,b_t]=\phi$ for $s\neq t$ and so the sequence $\{y_i\}_{i=1}^{\infty}$ is an $E-$pseudo orbit on $T'$, and for every $i\in T'$ we also have $(y_i,x_i)\in E_1^2\subset E$.
		\end{proof}
		
		\begin{lemma}\label{L3}
			Let $(X,f)$ be a dynamical system. For every entourage $E$ there is an entourage $D$ such that for every $D-$pseudo orbit $\{x_i\}_{i=1}^{\infty}$ of $f$ there exist an $E-$pseudo orbit $\{y_i\}_{i=1}^{\infty}\subset CR(f)$ and $N>0$ such that $(x_i,y_i)\in E$, for every $i\geq N$.
		\end{lemma}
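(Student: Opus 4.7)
The plan is to mimic the structure of Lemma \ref{L2} but to produce a single $E$-pseudo orbit on all of $\N$ rather than only on a thick subset. The key idea is that when the finite discretization sequence $\{z_i\}$ visits some common value $p_\ell$ at infinitely many indices, the Lemma \ref{L1}-traced chains on the resulting blocks can be spliced together through these common visits, producing a single pseudo-orbit in $CR(f)$ that traces $\{x_i\}$ from some index onward.

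First I would calibrate the entourages. Given $E$, choose a symmetric $E_1$ with $E_1^3 \subset E$, apply Lemma \ref{L1} with $E_1$ in the role of $E$ to obtain an entourage $F_1$, pick a symmetric $F_2$ with $F_2^3 \subset F_1$, and using uniform continuity of $f$ take a symmetric $D \subset F_2 \cap E_1$ such that $(p,q) \in D$ implies $(f(p),f(q)) \in F_2$. By total boundedness, $X = \bigcup_{j=1}^s D[p_j]$ for finitely many $p_1, \dots, p_s$. For a given $D$-pseudo orbit $\{x_i\}$, set $z_i = p_{j(i)}$ whenever $x_i \in D[p_{j(i)}]$. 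A routine triangle-type estimate then gives $(f(z_i), z_{i+1}) \in F_2^3 \subset F_1$, so $\{z_i\}$ is an $F_1$-pseudo orbit taking values in the finite set $\{p_1, \ldots, p_s\}$.

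Next comes the block decomposition and splicing. By pigeonhole some $p_\ell$ occurs at infinitely many indices $n_1 < n_2 < \cdots$; set $N := n_1$. Each segment $\{z_{n_k}, z_{n_k+1}, \ldots, z_{n_{k+1}}\}$ is an $F_1$-chain from $p_\ell$ to $p_\ell$, so Lemma \ref{L1} furnishes an $E_1$-chain $\{w_i^{(k)}\}_{n_k \le i \le n_{k+1}} \subset CR(f)$ with $w_{n_k}^{(k)} = w_{n_{k+1}}^{(k)}$ and $(z_i, w_i^{(k)}) \in E_1$. Define $y_i := w_i^{(k)}$ whenever $n_k \le i < n_{k+1}$. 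Tracing for $i \ge N$ is immediate since $(x_i, z_i) \in D \subset E_1$ and $(z_i, y_i) \in E_1$, giving $(x_i, y_i) \in E_1^2 \subset E$. The only non-trivial pseudo-orbit check is at a junction $i = n_{k+1}$: from block $k$ one has $(f(w_{n_{k+1}-1}^{(k)}), w_{n_{k+1}}^{(k)}) \in E_1$, while both $w_{n_{k+1}}^{(k)}$ and $w_{n_{k+1}}^{(k+1)}$ sit in $E_1[p_\ell]$ by the tracing property, so their pair lies in $E_1^2$; composing gives $(f(y_{n_{k+1}-1}), y_{n_{k+1}}) \in E_1^3 \subset E$.

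For the initial segment $i < n_1$, I would fill in values $y_i \in CR(f)$ by exploiting that $w_{n_1}^{(1)} \in CR(f)$ admits chains back to itself of arbitrary length (by concatenating one such chain sufficiently many times), which, straightened via one more application of Lemma \ref{L1}, produces a short $E_1$-pseudo orbit in $CR(f)$ joining smoothly to $y_{n_1}$. The main obstacle of the whole argument is the splice step: unlike Lemma \ref{L2}, where the pseudo-orbit property is only demanded on a thick subset so that gaps between blocks may be filled arbitrarily, here it must hold at every index. The splice is saved by the fact that both endpoints of any junction are pulled within $E_1$ of the same base point $p_\ell$, supplying the extra $E_1^2$ margin that combines with the block $E_1$-chain condition to give the required $E_1^3 \subset E$ transition; handling the pre-$N$ prefix is a minor bookkeeping matter once this central splice is in place.
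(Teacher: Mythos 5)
Your proposal is correct and follows essentially the same route as the paper's proof: discretize the $D$-pseudo orbit through a finite set supplied by total boundedness, pigeonhole a recurring value $p_\ell$, straighten each closed block between consecutive visits via Lemma \ref{L1} into $CR(f)$, splice the blocks, and pad the initial segment, taking $N$ to be the first visit. The only real difference is at the junctions, where the paper assumes the Lemma \ref{L1} chains share a common starting point while you instead spend an $E_1^3\subset E$ margin using that both junction values lie in $E_1[p_\ell]$ (arguably a more careful version of the same splice); your phrase ``chains back to itself of arbitrary length'' should be read as ``a suffix of a sufficiently long concatenated closed chain,'' which is exactly the minor bookkeeping you acknowledge.
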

		\begin{proof}
			Let $E'$ be a symmetric entourage such that $E'^2\subset E$, $F$ be a symmetric entourage provided for $E'$ by Lemma \ref{L1} with $F\subset E'$ and $F'$ be a symmetric entourage such that $F'^2\subset F$. By uniform continuity of $f$ there exists a symmetric entourage $D$ with $D^2\subset F'$, such that if $(p,q)\in D$ then $(f(p),f(q))\in F'$. Let $D'$ be a symmetric entourage such that $D'^2\subset D$. Since $X$ is compact, it follows that it is totally bounded. Therefore there is a finite set $A=\{a_1,\dots a_n\}$ such that $D'[A]=X$. Take  $p_j\in D'[a_j]$, for each $j$. Let $\{x_i\}_{i=1}^{\infty}$ be a $D-$pseudo orbit. Then, for every $i\in \N$, $(f(x_i),x_{i+1})\in D$. Further, for every $i$ there is $j(i)$ such that $x_i\in D'[a_{j(i)}]$. For this $j(i)$, denote $p_{j(i)}=z_i$. Then, for $i\in \N$, $(x_i,a_{j(i)})$, $(z_i,a_{j(i)})\in D'$ as $p_{j(i)}=z_i\in D'[a_{j(i)}]$. But this implies, $(z_i,x_i)\in D$ and $(f(z_i),z_{i+1})\in F$. Now, $\{z_i:i\in \mathbb{N}\}\subset \{p_i:1\leq i\leq s\}$ implies there exists an integer $l$, $1\leq l\leq s$, and an increasing sequence $\{a_k\}_{k=1}^{\infty}$ such that $z_{a_k}=z_{a_{k+1}}=p_l$, for every $k\geq 1$. Note that the sequence $\{z_i\}_{i=a_1}^{\infty}=\{z_{a_1},z_{a_1+1},\dots,z_{a_2-1},z_{a_2},\dots,z_{a_3},\dots\}$. Therefore $\{z_i\}_{i=a_1}^{\infty}$ consists of $F-$chain from $p_l$ to $p_l$, for all $k\geq 1$.
			
			\noindent By Lemma \ref{L1}, for every $k$ there exists an $E'-$chain $\{y_{a_k},y_{a_{k}+1},\dots,y_{a_{k+1}}\}\subset CR(f)$ such that $(y_i,z_i)\in E'$. Assuming that the starting point of each constructed chain is the same, i.e., $y_{a_1}=y_{a_k}$, for every $k$. Define $\{y_i\}_{i=1}^{a_1}$ to be the last $a_1$ elements in the $E'-$chain $\{y_{a_1},y_{a_1+1},\dots,y_{a_{a_1+1}}\}$. Thus we obtain an $E-$pseudo orbit $\{y_i\}_{i=1}^{\infty}\subset CR(f)$ and $(y_i,x_i)\in E$ for every $i\geq a_1$. Taking $N=a_1$, will complete the proof.
		\end{proof}
		
		\noindent In \cite{SA4}, Ahmadi et al. defined a relation $\sim_D$ on $X$ by putting $x\sim_D y$ if there exists a $D-$chain from $x$ to $y$ for topologically chain transitive map. They proved that $\sim_D$ is an equivalence relation on $X$ and for any $x\in X$, $[x]_{\sim_D}$, the equivalence class of $x$ under $\sim_D$ is clopen set. We define $\sim_D$ on $CR(f)$. 
		
		\begin{definition}
			Let $(X,f)$ be a dynamical system. For an entourage $D$, a relation $\sim_D$ on $CR(f)$ by putting $x\sim_D y$ if there is a $D-$chain from $x$ to $y$.
		\end{definition} 
		
		\noindent The following result is similar to the Lemma 3.5 of \cite{Oprocha}. We give the proof for completion. 
		\begin{lemma}\label{6L-1}
			Let $(X,f)$ be a dynamical system. For any entourage $D$, $\sim_D$ defined on $CR(f)$ is equivalence relation and there are finitely many equivalence classes which are clopen sets.
		\end{lemma}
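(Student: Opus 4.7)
The plan is to verify that $\sim_D$ is an equivalence relation, show that every equivalence class is open, and then deduce finiteness and closedness from compactness of $CR(f)$. Reflexivity is immediate: for $x \in CR(f)$, the definition of chain recurrence supplies a $D$-chain from $x$ to $x$. Transitivity comes from concatenating two $D$-chains at their shared endpoint.

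The main obstacle I expect is symmetry, since the defining condition is a priori one-sided. The plan is to exploit chain recurrence of both endpoints together with the freedom to use arbitrarily long chains: given a $D$-chain $\{x_0=x,\ldots,x_n=y\}$ and a $D$-chain $\{y=z_0,\ldots,z_m=y\}$ furnished by $y \in CR(f)$, one constructs a reverse $D$-chain by a bookkeeping argument analogous to the one used in Lemma \ref{L3}, namely replacing $D$ with a symmetric entourage $D'$ satisfying $D' \circ D' \subset D$, selecting a finite $D'$-net $\{p_1,\ldots,p_s\}$ in $X$, tracking the indices where the forward chain revisits the same $p_l$, and splicing in return loops at $y$ provided by its chain recurrence. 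In effect this shows that on $CR(f)$ the one-sided preorder coincides with the two-sided relation ``there are $D$-chains in both directions,'' which is manifestly an equivalence relation.

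For openness of the class $[x]_{\sim_D}$, suppose $y \in [x]_{\sim_D}$ is witnessed by a $D$-chain $\{x_0=x,\ldots,x_n=y\}$. Uniform continuity of $f$, together with the fact that $D$ is an entourage, provides an entourage $G$ such that $(f(x_{n-1}),z) \in D$ whenever $(y,z) \in G$; for any $z \in G[y] \cap CR(f)$ the chain $\{x_0,\ldots,x_{n-1},z\}$ certifies $x \sim_D z$. A symmetric argument near the initial end of the chain then shows a full neighborhood of $y$ in $CR(f)$ lies in $[x]_{\sim_D}$.

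Finally, since $CR(f)$ is closed in the compact space $X$ it is itself compact, and the equivalence classes form a pairwise disjoint open cover of $CR(f)$; compactness forces their number to be finite. Each class is then also closed in $CR(f)$, because its complement is the union of the finitely many other open classes, so every class is clopen as claimed.
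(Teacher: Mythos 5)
Your reflexivity, transitivity, openness and compactness steps are essentially the paper's. The genuine gap is the symmetry step, which is the heart of the lemma. The mechanism you sketch --- pass to a symmetric $D'$ with $D'\circ D'\subset D$, choose a finite $D'$-net, track where the forward chain from $x$ to $y$ revisits the same net point, and splice in return loops at $y$ --- cannot produce a $D$-chain from $y$ to $x$: pigeonholing along the forward chain only yields loops among its own points, and loops at $y$ never carry you back toward $x$. Moreover the conclusion you assert (``the one-sided preorder coincides with the two-sided relation'') is false if the intermediate points of the chain are arbitrary points of $X$ and you only use chain recurrence of the two endpoints. For a north--south homeomorphism of the circle one has $CR(f)=\{N,S\}$; for every entourage $D$ there is a $D$-chain from the repelling fixed point $N$ to the attracting fixed point $S$ (perturb once off $N$, follow a true orbit, and jump to $S$ when close), but for $D$ small there is no $D$-chain from $S$ back to $N$, since all $D$-chains starting at $S$ remain in a small neighbourhood of $S$. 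So no bookkeeping argument of the type you propose, using recurrence of $y$ alone, can close this step.

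What the paper does instead is reverse the chain link by link, using chain recurrence of \emph{both} endpoints of each link: if $x,y\in CR(f)$ and $(f(x),y)\in E$ for a symmetric entourage $E$ with $E^3\subset D$, take $E$-chains $\{x_0,\dots,x_n\}$ from $x$ to $x$ and $\{y_0,\dots,y_m\}$ from $y$ to $y$ and splice them as $\{y_0,\dots,y_{m-1},x_1,\dots,x_n\}$; the only new junction satisfies $(f(y_{m-1}),x_1)\in E^3\subset D$, giving a $D$-chain from $y$ to $x$. One then inducts along the given chain, which requires every point of that chain to be chain recurrent, i.e.\ the chains defining $\sim_D$ are taken inside $CR(f)$ (as in Oprocha's Lemma~3.5; in the application in Theorem~\ref{T2} one has $CR(f)=X$, so nothing is lost there). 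Your proposal uses recurrence only of the terminal point, which the example above shows is insufficient, so the symmetry step needs to be replaced by this per-link argument. A minor further point: in your openness step the claim that some entourage $G$ gives $(f(x_{n-1}),z)\in D$ for all $z\in G[y]$ does not follow from uniform continuity; it needs interiors of entourages (Lemma~\ref{L12}) or a preliminary shrinking of the entourage --- the paper elides the same point, so I only flag it.
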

		\begin{proof}
			For any $x\in CR(f)$, $x\sim_D x$. Also, $x\sim_D y$ and $y\sim_D z$ implies there is a $D-$chain $\{x=x_0,x_1,\dots,x_p=y\}$ from $x$ to $y$ and $D-$chain $\{y=y_0,y_1,\dots,y_q=z\}$ from $y$ to $z$. Consider a sequence $\{x=x_0,x_1,\dots, x_p=y=y_0,y_1,\dots,y_q=z\}$, which is a $D-$chain from $x$ to $z$. Hence, $x\sim_D z$.
			
			\noindent Next, observe that if $x,\; y\in CR(f)$ are such that $\left(f(x),y\right)\in D$, then there exists a symmetric entourage $E$ with $E^3\subset D$ such that $\left(f(x),y\right)\in E$. Also, there exist $E-$chains $\{x_0,\dots,x_n\}$ and $\{y_0,\dots,y_m\}$ from $x$ to $x$ and from $y$ to $y$ respectively. Since $(f(y_{m-1}),x_1)\in E$, $(y,f(x))\in E$ and $(f(x),x_1)\in E$, it follows that $\left(f(y_{m-1}),x_1\right)\in D$. Therefore, a sequence $\{y=y_0, y_1,\dots, y_{m-1},x_1,\dots,x_n\}$ is a $D-$chain from $y$ to $x$. By induction, if there is a $D-$chain from $x$ to $y$ contained completely in $CR(f)$ then there is also a $D-$chain contained in $CR(f)$ from $y$ to $x$. Hence $y\sim_D x$. Thus $\sim_D$ is an equivalence relation on $CR(f)$.
			
			\noindent Let $x\in CR(f)$ with$D-$chain $\{x_1=x,x_2,\dots,x_n=x\}$. Then there exists entourage $E$ such that $E\subset D$ and if $z\in E[x]$ then $(f(z),x_1)$, $(f(x_{n-1}),z)\in D$. This implies that $z\in CR(f)$ because, $\{z,x_1,\dots,x_{n-1},z\}$ is a $D-$chain from $z$ to $z$. Now, $\{z,x_1,\dots,x_n=x\}$ is a $D-$chain from $z$ to $x$. Hence $z\in [x]_{\sim_D}$. This implies that $E[x]\cap CR(f)\subset [x]_{\sim_D}$. Thus, $[x]_{\sim_D}$ is an open set. Since $X$ is compact, it follows that there are only finitely many equivalence classes and they are closed too.
		\end{proof}
		
		\noindent Ahmadi et al. proved similar to the Lemma \ref{L5} for topologically chain transitive system \cite{SA4}. We state the same result on $CR(f)$.
		\noindent The proof of the Lemma \ref{L5} and Lemma \ref{L6} are analogous to the proof of Lemma 3.6 and Lemma 3.7 of \cite{Oprocha}, respectively, in the case of metric space. So, we omit the proof to avoid repetition.
		\begin{lemma}\label{L5} 
			Let $(X,f)$ be a dynamical system. Fix an entourage $D$ and let $A=[z]_{\sim_D}$ for some $z\in CR(f)$. Then there exists $k_A\geq 1$ such that for every $x\in A$ the number $k_A$ is the gcd of the lengths of all $D-$chains from $x$ to $x$ contained in $CR(f)$. 
		\end{lemma}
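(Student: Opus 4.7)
The plan is to prove the two required properties—well-definedness of $k_A$ and positivity—by a single concatenation argument. For each $x \in A$ set
\[
L(x) = \{n \geq 1 : \text{there is a } D\text{-chain from } x \text{ to } x \text{ of length } n \text{ contained in } CR(f)\},
\]
and let $k(x) = \gcd L(x)$. The goal is to show $k(x)$ is independent of $x \in A$, and to take that common value as $k_A$.

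First I would observe that $A \subseteq CR(f)$ is an equivalence class of $\sim_D$, so every $x \in A$ satisfies $x \sim_D x$; by the (implicit, and used in Lemma \ref{6L-1}) convention that $\sim_D$ on $CR(f)$ is realized by $D$-chains inside $CR(f)$, this gives $L(x) \neq \emptyset$, and hence $k(x) \geq 1$.

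Next, fix $x, y \in A$. Because they lie in the same $\sim_D$-class, there exist $D$-chains $\alpha = \{x=x_0,\dots,x_p=y\}$ and $\beta = \{y=y_0,\dots,y_q=x\}$, both contained in $CR(f)$ (the chain from $y$ to $x$ exists by the symmetry argument recorded in Lemma \ref{6L-1}). The concatenation $\beta * \alpha = \{y_0,\dots,y_q=x_0,\dots,x_p\}$ is then a $D$-chain from $y$ to $y$ of length $p+q$ in $CR(f)$, so $p+q \in L(y)$. Now for any $n \in L(x)$ witnessed by a chain $\gamma = \{x=z_0,\dots,z_n=x\} \subset CR(f)$, the triple concatenation $\beta * \gamma * \alpha$ is a $D$-chain from $y$ to $y$ of length $p+q+n$ contained in $CR(f)$, hence $p+q+n \in L(y)$. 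Therefore $k(y)$ divides both $p+q$ and $p+q+n$, and so divides their difference $n$. Since $n \in L(x)$ was arbitrary, $k(y) \mid k(x)$, and swapping the roles of $x$ and $y$ gives $k(x) = k(y)$. Setting $k_A := k(x)$ for any $x \in A$ concludes the proof.

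The argument is essentially an elementary divisibility computation; the only care required is bookkeeping—verifying that concatenations of $D$-chains remain $D$-chains (immediate once endpoints agree) and that each piece stays inside $CR(f)$. The latter is the sole nontrivial point and is supplied by the definition of $\sim_D$ on $CR(f)$ together with Lemma \ref{6L-1}, so I do not expect any serious obstacle beyond this verification.
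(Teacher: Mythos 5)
Your argument is correct and is essentially the same concatenation-and-divisibility proof as Lemma 3.6 of Oprocha, which is exactly the proof the paper defers to (it omits the argument for Lemma \ref{L5}, citing that result). The only point worth keeping explicit is the one you already flag: membership in $A=[z]_{\sim_D}$ must be witnessed by $D$-chains contained in $CR(f)$ (the convention underlying Lemma \ref{6L-1}), so that the chains $\alpha$, $\beta$ and the nonemptiness of $L(x)$ are available inside $CR(f)$.
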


		\begin{lemma} \label{L6}
			Let $(X,f)$ be a dynamical system and let $E$ and $D$ be two entourages such that every $D-$pseudo orbit contained in $CR(f)$ can be $E-$traced. If $T$ is a thick set and $\{x_i\}_{i=1}^{\infty}\subset CR(f)$ is a $D-$pseudo orbit on $T$ then there exists a thick set $T'\subset T$ and $z$ such that $\left(f^j(z),x_j\right)\in E$ for every $j\in T'$.
		\end{lemma}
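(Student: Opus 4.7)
The plan is to modify $\{x_i\}_{i=1}^{\infty}$ \emph{outside} long runs contained in $T$ so as to produce a genuine global $D$-pseudo orbit $\{y_i\}_{i=1}^{\infty}\subset CR(f)$ which still coincides with $\{x_i\}$ on a thick subset $T'\subseteq T$. Once such a $\{y_i\}$ is built, the hypothesis of the lemma delivers $z\in X$ with $(f^i(z),y_i)\in E$ for every $i\in\N$, and since $y_j=x_j$ for $j\in T'$, the conclusion follows immediately.

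First, since $T$ is thick I would extract blocks $[a_k,b_k]\subset T$ with $a_{k+1}>b_k+1$ and $b_k-a_k\to\infty$. Because each index in $[a_k,b_k]$ lies in $T$, the pseudo-orbit condition forces $\{x_{a_k},x_{a_k+1},\dots,x_{b_k+1}\}$ to be a $D$-chain inside $CR(f)$, so all these points belong to a single $\sim_D$-equivalence class. By Lemma \ref{6L-1} there are only finitely many such classes; applying pigeonhole to the sequence $\{x_{a_k}\}$ and passing to a subsequence of blocks, I may assume that every $x_{a_k}$ lies in a fixed class $A$. Let $k_A$ be the integer supplied by Lemma \ref{L5}.

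Next I would bridge the gaps. For each $k$ both $x_{b_k+1}$ and $x_{a_{k+1}}$ lie in $A$, so some $D$-chain inside $CR(f)$ connects them. A standard consequence of Lemma \ref{L5}, obtained by inserting ``loops'' of length $k_A$ at an intermediate vertex, is that $D$-chains between any two prescribed points of $A$ exist in every sufficiently large length belonging to a fixed residue class modulo $k_A$. Thinning the subsequence of blocks so that the gaps $a_{k+1}-b_k$ exceed the relevant threshold, I can then replace $b_k$ by $b_k'\in[b_k-k_A+1,b_k]$ and obtain a $D$-chain of length exactly $a_{k+1}-b_k'-1$ from $x_{b_k'+1}$ to $x_{a_{k+1}}$ inside $A$. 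Setting $y_i=x_i$ for $i\in[a_k,b_k'+1]$, inserting these chains in the intervening positions, and using any $D$-chain in $CR(f)$ terminating at $x_{a_1}$ to fill the initial segment, I obtain a $D$-pseudo orbit $\{y_i\}_{i=1}^{\infty}\subset CR(f)$ (the transitions $i\mapsto i+1$ are good on $T$-indices by assumption, on the bridges by construction, and at the block boundaries for the same reasons).

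Finally, set $T'=\bigcup_k[a_k,b_k']$. Then $b_k'-a_k\ge b_k-a_k-k_A+1\to\infty$, so $T'\subseteq T$ is thick, and applying the hypothesis to $\{y_i\}$ produces $z\in X$ with $(f^i(z),y_i)\in E$ for all $i$, which restricts to $(f^j(z),x_j)\in E$ for every $j\in T'$. The main obstacle throughout is the length-matching of the bridging chains: their combinatorial lengths are dictated by the gap sizes $a_{k+1}-b_k'-1$, while chain lengths in $A$ are constrained modulo $k_A$. This is precisely where Lemma \ref{L5} and the pigeonhole reduction to a single class are essential, and the price, trimming each block by at most $k_A-1$ indices, is affordable because the blocks grow without bound.
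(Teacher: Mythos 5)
Your overall strategy is the right one (it is the strategy of Oprocha's Lemma~3.7, to which the paper defers): surgery outside long blocks of $T$ to produce a genuine $D$-pseudo orbit $\{y_i\}\subset CR(f)$ with $y_j=x_j$ on a thick $T'\subset T$, then trace $\{y_i\}$. But the key length-matching step fails as written. When you move the cut point from $b_k$ to $b_k'$, the required bridge length $a_{k+1}-b_k'-1$ changes, but the source point $x_{b_k'+1}$ changes in lockstep: the block itself is a $D$-chain of length $b_k-b_k'$ from $x_{b_k'+1}$ to $x_{b_k+1}$, and since any two chains between the same pair of points of $A$ have lengths congruent mod $k_A$, every chain length from $x_{b_k'+1}$ to $x_{a_{k+1}}$ is $\equiv (b_k-b_k')+M \pmod{k_A}$, where $M$ is the length of some chain from $x_{b_k+1}$ to $x_{a_{k+1}}$. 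The solvability condition for an exact-length bridge is therefore $a_{k+1}-b_k-1\equiv M\pmod{k_A}$, which does not involve $b_k'$ at all: trimming the end of a block (or, symmetrically, delaying the start of the next one) cannot repair a mismatch. A concrete failure: let $X=\{a,b\}$ with $f$ the swap, so $k_A=2$, and take a pseudo orbit whose successive blocks follow the true orbit but with alternating parity shift; then no pair of consecutive blocks can ever be bridged exactly, however you trim, although the conclusion of the lemma of course still holds (a single true orbit traces every other block).

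The missing idea is a second pigeonhole, on a \emph{phase} invariant, in addition to the one on the finitely many $\sim_D$-classes. Fixing $p\in A$, the residue mod $k_A$ of (position $j$) minus (length of a chain from $p$ to $x_j$) is constant along each block, so each block of your subsequence carries one of $k_A$ phases; infinitely many blocks share both the class $A$ and the phase, and for two such blocks the congruence above holds automatically, so exact-length bridges from $x_{b_k+1}$ to $x_{a_{k'}}$ exist once the separation is large enough. You then keep only these blocks (you are free to skip blocks of $T$; the union of the kept blocks is still thick), bridge consecutive kept blocks, and proceed as you proposed. Two further points need care: the threshold beyond which all lengths in the admissible residue class are realized depends a priori on the pair of endpoints, which varies with $k$, so you need a uniform bound (by a compactness/perturbation argument on the clopen class $A$, or by first replacing the pseudo orbit by one running through a finite set of reference points as in Lemmas~\ref{L2} and \ref{L3}); and all bridging chains must be taken inside $CR(f)$, so that the hypothesis on $E$-tracing applies to $\{y_i\}$.
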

		
		\begin{sloppypar}
		\noindent In the following Theorem we show that if $f$ has topological shadowing then it has $(\f_t,\f_t)-$shadowing.
		\end{sloppypar}
		
		\begin{theorem}\label{T1}
			Let $(X,f)$ be a compact dynamical system. If $f$ has topological shadowing then it has topological $(\f_t,\f_t)-$shadowing.			
		\end{theorem}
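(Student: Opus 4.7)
The plan is to chain Lemma \ref{L2} and Lemma \ref{L6} together, with the topological shadowing hypothesis serving as the bridge that feeds Lemma \ref{L6} its required input. Given an entourage $E$, first pick a symmetric entourage $E_1$ with $E_1 \circ E_1 \subset E$. By topological shadowing applied to $E_1$, there is an entourage $D_1$ such that every $D_1$-pseudo orbit of $f$ is $E_1$-traced by some point of $X$; in particular the same holds for every $D_1$-pseudo orbit contained in $CR(f)$, so the pair $(E_1, D_1)$ satisfies the hypothesis of Lemma \ref{L6}. Now set $E_2 = E_1 \cap D_1$, which is again an entourage by axiom (d) of a uniformity, and apply Lemma \ref{L2} to $E_2$ to obtain an entourage $D$.

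I claim this $D$ witnesses topological $(\f_t, \f_t)$-shadowing of $f$ for the entourage $E$. Fix a thick set $T$ and a $D$-pseudo orbit $\{x_i\}$ on $T$. By Lemma \ref{L2} there exist a thick set $T_1 \subseteq T$ and an $E_2$-pseudo orbit $\{y_i\} \subset CR(f)$ on $T_1$ satisfying $(x_i, y_i) \in E_2$ for every $i \in T_1$. Since $E_2 \subset D_1$, the sequence $\{y_i\}$ is in fact a $D_1$-pseudo orbit on $T_1$ entirely contained in $CR(f)$, so Lemma \ref{L6} applied with the pair $(E_1, D_1)$ yields a thick set $T' \subset T_1$ and a point $z \in X$ with $(f^j(z), y_j) \in E_1$ for every $j \in T'$. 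For such $j$ we also have $(x_j, y_j) \in E_2 \subset E_1$, so by symmetry of $E_1$ and $E_1 \circ E_1 \subset E$ we conclude $(f^j(z), x_j) \in E$. Thus $z$ $E$-traces $\{x_i\}$ on the thick set $T' \subseteq T$, as required.

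The main obstacle is largely bookkeeping the various entourages correctly: one must choose $E_2$ small enough that it simultaneously plays two roles, namely approximating $\{x_i\}$ by a $CR(f)$-valued sequence via Lemma \ref{L2} and producing a pseudo orbit fine enough to be traced via Lemma \ref{L6}, while remaining comparable to $E_1$ so that the two closeness relations can be composed. No additional dynamical input beyond the cited lemmas and the topological shadowing hypothesis seems to be needed, and notably no chain-recurrence assumption on $f$ is required since Lemma \ref{L2} already hands us a sequence inside $CR(f)$.
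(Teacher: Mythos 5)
Your proposal is correct and follows essentially the same route as the paper: apply Lemma \ref{L2} to pass to a $CR(f)$-valued pseudo orbit on a smaller thick set, use topological shadowing to supply the hypothesis of Lemma \ref{L6}, and compose the two closeness relations through a symmetric entourage $E_1$ with $E_1\circ E_1\subset E$. The only difference is cosmetic bookkeeping (you intersect $E_1\cap D_1$ where the paper chooses the shadowing entourage $D\subset E'$ and feeds it directly into Lemma \ref{L2}).
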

		\begin{proof}
			Let $E$ be a given entourage and $E'$ be a symmetric entourage such that $E'^2\subset E$. By shadowing property there exists an entourage $D$ with $D\subset E'$ such that every $D-$pseudo orbit can be $E'-$traced. For entourage $D$, by Lemma \ref{L2} there is an entourage $F$.
			
			\noindent We complete the proof by showing that any $F-$pseudo orbit $\{x_i\}_{i=1}^{\infty}$ on a thick set $T$, for some $T\in \f_t$, is $E-$traced on a thick set by some point of $X$. Now, $\{x_i\}_{i=1}^{\infty}$ is a $D-$pseudo orbit on $T$. Therefore by Lemma \ref{L2} there is a thick set $T'\subset T$ and a $D-$pseudo orbit $\{y_i\}_{i=1}^{\infty}$ on $T'$ with $\{y_i\}_{i=1}^{\infty}\subset CR(f)$. Further, by Lemma \ref{L6}, there is a thick set $T''$ with $T''\subset T'$ and a point $z\in X$ such that $z$ $E'-$traces $\{y_i\}_{i=1}^{\infty}$ on $T''$. Therefore, for all $i\in T''$, $(f^i(z),y_i)\in E'$. Again, $T''\subset T'\subset T$ and $(x_j,y_j)\in D$, for all $j\in T'$. Since $D\subset E'$ it follows that $j\in T'$, $(x_j,y_j)\in E'$. Hence $(f^i(z),x_i)\in E'^2\subset E$. Thus $\{x_i\}_{i=1}^{\infty}$ is $E-$traced by $z$ on a thick set $T''$ and hence $f$ has topological $(\f_t,\f_t)-$shadowing property. 
		\end{proof}
				
		\smallskip
		\noindent Let $(X,f)$ be a dynamical system. Recall, a map $f$ has topological finite shadowing if for every entourage $E$ there exists an entourage $D$ such that every finite $D-$pseudo orbit in $X$ is $E-$traced by some point of $X$.
		
		\begin{lemma}\cite{KBM}\label{L10}
			Let $(X,f)$ be a dynamical system. If $f$ has topological finite shadowing, then $f$ has topological shadowing  property. 
		\end{lemma}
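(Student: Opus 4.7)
The plan is to build an $E$-tracing point for an infinite pseudo orbit as a limit of points that $E_1$-trace longer and longer initial segments, where $E_1$ is a suitably shrunk entourage chosen so that the limit still lands inside $E$. The key tool from the theory of uniform spaces is the standard fact that the closed entourages form a base for any uniformity: if $V$ is a symmetric entourage with $V \circ V \circ V \subseteq U$ then the closure $\overline{V}$ in $X \times X$ is contained in $U$. Applying this with $U = E$ yields a symmetric entourage $E_1$ with $\overline{E_1} \subseteq E$.

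Given this $E_1$, apply topological finite shadowing to obtain an entourage $D$ such that every finite $D$-pseudo orbit is $E_1$-traced. Let $\{x_i\}_{i \in \N}$ be any $D$-pseudo orbit in $X$. For each $n \geq 1$, the initial segment $\{x_0, x_1, \dots, x_n\}$ is a finite $D$-pseudo orbit, so there exists $z_n \in X$ with $\bigl(f^i(z_n), x_i\bigr) \in E_1$ for all $0 \leq i \leq n$. Since $X$ is compact, the sequence $\{z_n\}$ admits a convergent subnet $z_{n_\alpha} \to z$ in $X$ (one must pass to subnets rather than subsequences because $X$ need not be metrizable). For each fixed index $i$, cofinality of the subnet gives $n_\alpha \geq i$ eventually, and hence $\bigl(f^i(z_{n_\alpha}), x_i\bigr) \in E_1$ eventually. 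Uniform continuity of $f$ makes each iterate $f^i$ continuous, so $f^i(z_{n_\alpha}) \to f^i(z)$, and therefore the net $\bigl(f^i(z_{n_\alpha}), x_i\bigr)$ converges to $\bigl(f^i(z), x_i\bigr)$ in $X \times X$. As every term of this net lies in $E_1$, the limit lies in $\overline{E_1} \subseteq E$. Since $i$ was arbitrary, $z$ is an $E$-tracing point of $\{x_i\}_{i \in \N}$, and $f$ has the topological shadowing property.

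The principal technical point is the absence of metrizability, which forces the use of nets and requires choosing $E_1$ with $\overline{E_1} \subseteq E$ so that the $E_1$-tracing relation survives passage to a net-limit; once the closed-entourage base property of the uniformity is invoked, the rest is a routine finite-to-infinite compactness argument. A secondary subtlety is ensuring cofinality of $n_\alpha$: this is automatic from the definition of a subnet of the sequence indexed by $\N$, but is worth noting explicitly because it is what lets us apply the tracing condition at every fixed index $i$.
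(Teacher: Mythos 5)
Your proof is correct. Note that the paper itself gives no argument for this lemma --- it is quoted from \cite{KBM} --- so there is nothing internal to compare against; what you have written is the standard finite-to-infinite compactness argument, and it is complete. The two points that usually cause trouble are both handled properly: you shrink $E$ to a symmetric $E_1$ with $\overline{E_1}\subseteq E$ (using $E_1\circ E_1\circ E_1\subseteq E$, i.e.\ the fact that closed entourages form a base of the uniformity), so that the tracing relation survives the passage to a limit, and you pass to a convergent subnet rather than a subsequence, with the cofinality of $\alpha\mapsto n_\alpha$ justifying the application of the tracing estimate at each fixed index $i$. Two minor remarks: compactness of $X$ yields a cluster point, hence a convergent subnet, without any Hausdorff hypothesis, so your argument works for the paper's general notion of dynamical system; and plain continuity of $f$ already gives continuity of each $f^i$, so the appeal to uniform continuity is unnecessary (though harmless, since $X$ is compact). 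One could instead choose the tracing points $z_n$ and take $z$ to be any cluster point, then argue by contradiction with a single entourage; that is only a cosmetic variant of what you did.
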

		
		\begin{proof}[\bf Proof of Theorem~\ref{T2}]
			
			 We show that $(1)\implies (3)\implies (2)\implies (4)\implies (1)$. By Theorem \ref{T1} $(1)\implies (3)$ follows. Further, $\N\subset \mathscr{D}\subset \f_t$. Therefore by Proposition \ref{L7}, we obtain $(3) \implies (2) \implies (4)$. Hence it only remains to show that $(4) \implies (1)$.
			
			\noindent Let $E$ be an entourage. Then there exists an entourage $D$ satisfying the definition of topological $(\N,\f_t)-$shadowing. In view of Lemma \ref{L10} it is sufficient to show that every $D-$chain is $E-$traced by some point of $X$. Let $\{x_1,\dots,x_n\}$ be a $D-$chain. The relation $\sim_D$ is an equivalence relation on $CR(f)=X$. Hence $\{x_i\}_{i=1}^{n}\subset [x_1]_{\sim_D}$. Further, there exists a $D-$chain $\{y_1,\dots, y_m\}$ from $x_n$ to $x_1$. Consider, a sequence $$\{z_i\}_{i=1}^{\infty}=\{x_1,\dots,x_{n-1},y_1,\dots,y_{m-1},x_1,\dots,x_{n-1},y_1,\dots\}.$$ Then $\{z_i\}$ is a $D-$pseudo orbit and therefore by topological $(\N,\f_t)-$shadowing there exists a point $z$ in $X$ and a thick set $T$, $T\in \f_t$ such that $$\left(f^j(z),z_j\right)\in E, \mbox{ for all } j\in T.$$
			Since $T$ contains arbitrary large blocks of consecutive integers, there is $k$ such that $$[k(n+m),k(n+m)+n]\cap \N\subset T.$$
			Put $N=k(m+n)$ and $x=f^N(z)$. Then, for $j=1,\dots,n$, $$\left(f^j(x),x_j\right)=\left(f^{N+j}(z),z_{N+j}\right)\in E.$$ Hence $\{x_i\}_{i=1}^{\infty}$ is $E-$traced by $x$.
		\end{proof}
		
		\section{Syndetically proximal relation in uniform dynamical system}
		
		\begin{sloppypar}
		\noindent In this section we give the condition under which a map do not have $(\mathscr{P}(\N),\f_{ps})-$shadowing.
		\end{sloppypar}
					
		\noindent In the following we define syndetically proximal pair on a uniform space.
		\begin{definition}
			Let $(X,f)$ be a dynamical system. A pair $(x,y)\in X\times X$ is said to be \emph{syndetically proximal pair} if for every $E\in \mathscr{U}$, the set $\{n\in \N:\left(f^n(x),f^n(y)\right)\in E\}$ is syndetic. The set of syndetically proximal pairs of $f$ is denoted by $SPR(f)$. 
		\end{definition}
		
		\noindent In the following proposition we obtain a condition for a pair $(x,y)$ to be syndetically proximal pair.
		\begin{proposition}\label{L8}
			Let $(X,f)$ be a dynamical system. Then $(x,y)\in SPR(f)$ if and only if $\{n\in \N:\left(f^n(x),f^n(y)\right)\in E\}$ is a thickly syndetic set, for every entourage $E$.
		\end{proposition}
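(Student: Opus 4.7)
The plan is to prove both directions, noting that one direction is immediate and the other is the content. For the $(\Leftarrow)$ direction, I observe that every thickly syndetic set is syndetic: taking $n=1$ in the definition, the set of positions where blocks of length $1$ begin is exactly the set itself, and this is required to be syndetic. Hence if $\{n\in \N:(f^n(x),f^n(y))\in E\}$ is thickly syndetic for every $E\in\mathscr{U}$, it is in particular syndetic for every $E\in\mathscr{U}$, so $(x,y)\in SPR(f)$.

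For the main direction $(\Rightarrow)$, assume $(x,y)\in SPR(f)$ and fix an entourage $E$. I want to show that for every $n\in\N$, the set of positions in $\{k\in \N:(f^k(x),f^k(y))\in E\}$ where a block of length $n$ begins is syndetic. The natural tool is uniform continuity of $f$ (which holds because $X$ is a compact uniform space, by the fact quoted from Kelley). Using it iteratively, I would produce a symmetric entourage $F\in\mathscr{U}$ such that $(p,q)\in F$ implies $(f^i(p),f^i(q))\in E$ for every $i=0,1,\dots,n-1$; concretely, choose entourages $E=E_0\supset E_1\supset \cdots \supset E_{n-1}=F$ with $(p,q)\in E_{i+1}\Rightarrow (f(p),f(q))\in E_i$, and intersect with their inverses to make each symmetric.

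The key step is then the following inclusion: if $k\in\N$ satisfies $(f^k(x),f^k(y))\in F$, then applying the defining property of $F$ to the pair $(f^k(x),f^k(y))$ yields $(f^{k+i}(x),f^{k+i}(y))\in E$ for every $i=0,\dots,n-1$, so $k,k+1,\dots,k+n-1$ all lie in $\{m\in \N:(f^m(x),f^m(y))\in E\}$. Thus $k$ is a left endpoint of a length-$n$ block in that set, and we obtain the inclusion
\[
\{k\in\N:(f^k(x),f^k(y))\in F\}\subset \bigl\{k\in\N:\{k,k+1,\dots,k+n-1\}\subset \{m:(f^m(x),f^m(y))\in E\}\bigr\}.
\]

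The hypothesis $(x,y)\in SPR(f)$ applied to $F$ says the left-hand side is syndetic; since any superset of a syndetic subset of $\N$ is syndetic, the right-hand side is syndetic. As $n\in\N$ was arbitrary, the set $\{n\in\N:(f^n(x),f^n(y))\in E\}$ is thickly syndetic, which is what was to be shown. The only real obstacle is the construction of the entourage $F$ uniformly controlling $n$ iterates of $f$, but this is routine from uniform continuity on a compact uniform space; everything else is a set-theoretic manipulation of the definitions of syndetic and thickly syndetic.
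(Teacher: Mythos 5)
Your proposal is correct and follows essentially the same route as the paper: both directions are handled identically, with the forward implication obtained by choosing, via uniform continuity, an entourage $F$ (the paper's $D$) controlling the first $n$ iterates, noting that $\{k:(f^k(x),f^k(y))\in F\}$ is syndetic and sits inside the set of left endpoints of length-$n$ blocks of $\{m:(f^m(x),f^m(y))\in E\}$, and using that supersets of syndetic sets are syndetic. Your explicit chain $E_0\supset E_1\supset\cdots\supset E_{n-1}$ just spells out the uniform-continuity step the paper asserts in one line.
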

		\begin{proof}
			Let $(x,y)\in SPR(f)$, $E$ be an entourage and $k\in \N$. Then by uniform continuity of $f$ there exists an entourage $D$ such that $$(a,b)\in D \implies \left(f^j(a),f^j(b)\right)\in E, \mbox{ for }0\leq j<k.$$
			Note that the set $\{n\in \N:\left(f^n(x),f^n(y)\right)\in D\}$ is syndetic as $(x,y)\in SPR(f)$.
			Also, $\{n\in \N:\left(f^n(x),f^n(y)\right)\in D\}\subset \{n\in \N:\left(f^{n+j}(x),f^{n+j}(y)\right)\in E, \mbox{ for } 0\leq j<k\}$. Therefore for given $k$, $\{n\in \N:\left(f^{n+j}(x),f^{n+j}(y)\right)\}\in E,\; 0\leq j\leq k\}$ is a syndetic, being a superset of a syndetic set. Hence $\{n\in \N: (f^n(x),f^n(y))\in E\}$ is a thickly syndetic set.
			
			\noindent Converse is obvious as every thickly syndetic set is syndetic.
		\end{proof}
		
		\begin{lemma}\cite{James}\label{L12}
			Let $(X,\mathscr{U})$ be a uniform space. Then with respect to the product topology on $X\times X$, the interior of entourages of $X$ form a base for the uniformity $\mathscr{U}$.
		\end{lemma}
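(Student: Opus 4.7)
The plan is to show that for every entourage $U \in \mathscr{U}$, the interior $\mathrm{int}(U)$ of $U$ in the product topology on $X \times X$ is itself an entourage containing a preassigned smaller entourage. Since trivially $\mathrm{int}(U) \subset U$, this will show that the collection $\{\mathrm{int}(U) : U \in \mathscr{U}\}$ is a base for $\mathscr{U}$.

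First I would invoke the remark in the preliminaries guaranteeing, for any given $U \in \mathscr{U}$, a symmetric entourage $V$ with $V \circ V \circ V \subset U$. The main step is to verify $V \subset \mathrm{int}(U)$. Fix an arbitrary $(x,y) \in V$. By the definition of the uniform topology, $V[x]$ is a neighborhood of $x$ and hence contains an open set $O_x$ with $x \in O_x \subset V[x]$; similarly choose an open $O_y$ with $y \in O_y \subset V[y]$. Then $O_x \times O_y$ is an open neighborhood of $(x,y)$ in the product topology on $X \times X$. For any $(x',y') \in O_x \times O_y$, symmetry of $V$ yields $(x',x) \in V$, and combining this with $(x,y) \in V$ and $(y,y') \in V$ one obtains $(x',y') \in V \circ V \circ V \subset U$. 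Hence $O_x \times O_y \subset U$, so $(x,y) \in \mathrm{int}(U)$.

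This establishes $V \subset \mathrm{int}(U)$, and axiom (e) in the definition of a uniformity then forces $\mathrm{int}(U) \in \mathscr{U}$. I expect the only mild subtlety to be the fact that $V[x]$ is a neighborhood of $x$ but need not itself be open in the uniform topology, so one must replace it by a genuinely open subset $O_x \subset V[x]$ before passing to the product; once this is noted, the remainder is a direct ``triangle inequality'' chase through the threefold composition $V \circ V \circ V$.
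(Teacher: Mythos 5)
Your argument is correct, and it is the standard proof of this fact; the paper itself states Lemma \ref{L12} without proof, citing James, and the route you take (pick a symmetric $V$ with $V\circ V\circ V\subset U$, show $V\subset\mathrm{int}(U)$ via an open rectangle $O_x\times O_y$ and the threefold composition, then use axiom (e) to conclude $\mathrm{int}(U)\in\mathscr{U}$) is exactly the textbook one. The only step deserving one more line is the claim that $V[x]$ is a neighbourhood of $x$: from the paper's definition of the uniform topology this follows by checking that $O_x=\{z\in X: W[z]\subset V[x]\ \text{for some }W\in\mathscr{U}\}$ is open and contains $x$, a subtlety you correctly flagged.
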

		
		\noindent In the following Theorem, we obtain condition for a map $f$ to be syndetically proximal.
		\begin{theorem}\label{L11}
			Let $(X,f)$ be a dynamical system, where $X$ is Hausdorff uniform space. Then map $f$ is syndetically proximal if it has a fixed point which is a unique minimal subset of $X$.
		\end{theorem}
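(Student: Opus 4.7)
The plan is to adapt Moothathu's metric argument to the uniform setting in two stages: first establish that every point of $X$ is syndetically proximal to the fixed point $p$, and then lift this to an arbitrary pair $(x,y)$ via uniform continuity and the composition rule $V\circ V\subseteq E$. The first task is to show that for every $x\in X$ and every entourage $E$ the return-time set $A(x,E)=\{n\in\N : f^n(x)\in E[p]\}$ is syndetic. By Lemma \ref{L12} I may assume $E[p]$ is open, so that $X\setminus E[p]$ is closed. Arguing by contradiction, if $A(x,E)$ fails to be syndetic then its complement is thick, producing blocks $[a_k,a_k+L_k]$ with $L_k\to\infty$ on which $f^n(x)\notin E[p]$. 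Compactness of $X$ yields a subsequence with $f^{a_k}(x)\to y$, and continuity of the iterates forces $f^m(y)\in X\setminus E[p]$ for every $m\geq 0$; hence $\omega(y)$ is a nonempty closed forward-invariant subset of $X\setminus E[p]$, and a Zorn's lemma argument inside $\omega(y)$ produces a minimal set of $f$ that avoids $p$, contradicting the hypothesis that $\{p\}$ is the unique minimal set.

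Next I would upgrade the conclusion to: $A(x,E)$ is thickly syndetic for every $x$ and $E$. Given $k\in\N$, uniform continuity of the iterates of $f$ supplies an entourage $E_k$ with $(a,b)\in E_k$ implying $(f^i(a),f^i(b))\in E$ for $0\leq i\leq k$. Since $p$ is fixed, every $n\in A(x,E_k)$ gives a full block $[n,n+k]\subseteq A(x,E)$, and as $A(x,E_k)$ is already syndetic by the previous step this is precisely the thickly syndetic condition for $A(x,E)$. (Equivalently, this also follows by applying Proposition \ref{L8} to the pair $(x,p)$ once the first step is in place.)

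For the general case, let $(x,y)\in X\times X$ and entourage $E$ be given. Choose a symmetric entourage $V$ with $V\circ V\subseteq E$. By the thickly syndetic version both $A=A(x,V)$ and $B=A(y,V)$ are thickly syndetic, and any $n\in A\cap B$ satisfies $(f^n(x),p)\in V$ and $(p,f^n(y))\in V$ by symmetry, hence $(f^n(x),f^n(y))\in V\circ V\subseteq E$. To conclude syndetic proximality it then suffices to see $A\cap B$ is syndetic, which I would derive from the asymmetric observation that the intersection of a thickly syndetic set with a syndetic set is syndetic: if $B$ has gap at most $M$ and $A$ contains length-$M$ blocks beginning at a syndetic set of positions, each such block in $A$ is forced to meet $B$, producing points of $A\cap B$ at bounded gaps.

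The main obstacle is the uniform-space bookkeeping in the first step. I must use Lemma \ref{L12} to ensure $E[p]$ is genuinely open in the uniform topology, so that $X\setminus E[p]$ is closed and the limit $y$ of $f^{a_k}(x)$ really has its whole forward orbit trapped in $X\setminus E[p]$; I then need $\omega(y)$ to be nonempty, closed, and forward-invariant in the compact Hausdorff uniform setting, and to admit a minimal subset by Zorn's lemma. A secondary subtlety is that the family of syndetic subsets of $\N$ is not closed under finite intersections, which is exactly why the upgrade to thickly syndetic in the second step is indispensable before combining the information about $x$ and $y$ in the third step.
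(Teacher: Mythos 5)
Your argument is correct in substance, but it follows a genuinely different route from the paper. The paper works directly in the product system: from the hypothesis it asserts that $\{(c,c)\}$ is the unique minimal subset of $(X\times X, f\times f)$, deduces that for every pair $(x,y)$ some iterate $(f^n(x),f^n(y))$ enters the interior $E^{\circ}$ (because every orbit closure contains a minimal point), and then uses compactness of $X\times X$ to extract a finite subcover $X\times X=\bigcup_{n=1}^{k_E}(f\times f)^{-n}(E)$; this yields syndeticity of $\{n:(f^n(x),f^n(y))\in E\}$ with a gap bound $k_E$ that is \emph{uniform} over all pairs, and it avoids any intersection of return-time sets. Your route instead stays in $X$: you show each point is syndetically proximal to the fixed point $p$ (via the thick-complement/limit-point/minimal-subset contradiction), upgrade to thickly syndetic exactly as in Proposition \ref{L8}, and then intersect, using that a thickly syndetic set meets a syndetic set syndetically together with $V\circ V\subset E$. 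What your approach buys is that you never need the (unproved in the paper) claim $M(f\times f)=\{(c,c)\}$ and you get the stronger pointwise statement that every $x$ is syndetically proximal to $p$; what the paper's approach buys is brevity, no thickly-syndetic bookkeeping, and a single syndetic constant $k_E$ valid for all pairs. Both arguments rest on the same Zorn-type fact that nonempty closed invariant sets contain minimal sets, and both need the interior-entourage Lemma \ref{L12} to have open/closed sets to work with.

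One caveat to tidy up: in a compact Hausdorff uniform space that need not be metrizable you cannot extract a convergent \emph{subsequence} from $(f^{a_k}(x))_k$; you must pass to a convergent subnet (cofinality still gives $L_k\to\infty$ along it, so the trapping of the forward orbit of the limit $y$ in $X\setminus E^{\circ}[p]$ goes through unchanged). The paper's own proof has the same sequences-versus-nets imprecision, so this is a repair of wording rather than of the idea; likewise your length-$M$ blocks should be length $M+1$, and the finitely many blocks preceding $\min B$ must be discarded, neither of which affects syndeticity.
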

		\begin{proof}
			
			\noindent Let $E$ be an entourage and $M(f)=\{c\}$. Then $M(f\times f)=\{(c,c)\}$. We first show that $$\bigcup_{n=1}^{\infty}(f\times f)^{-n}(E^{\circ})=X\times X$$ where $E^{\circ}$ is interior of $E$. Let $(x,y)\in X\times X$. 
			Since every orbit closure contains minimal point it follows that $(c,c)\in \overline{O_{f\times f}(x,y)}$. We consider the following two cases: 
			
			\smallskip
			\noindent Case (i): Suppose $(c,c)\in O_{f\times f}(x,y)$. Then there exists $n\in \N$ such that $(c,c)=\left(f^n(x),f^n(y)\right)$. Therefore, $\left(f^n(x),f^n(y)\right)=(c,c)\in \Delta \subset E^{\circ}$.
			
			\noindent Case (ii): Suppose $(c,c)$ is a limit point of $O_{f\times f}(x,y)$. Then there exists an increasing sequence $\{n_k\}$ of natural numbers such that $$(f\times f)^{n_k}(x,y)\to (c,c), \mbox{ as } n_k\to \infty.$$ Equivalently, $$f^{n_k}(x)\to c \mbox{ and } f^{n_k}(y)\to c, \mbox{ as } n_k\to \infty.$$
			Let $F$ be a symmetric entourage such that $F\circ F\subset E$. Then by Lemma \ref{L12} there exists $k_0\in \N$ such that $f^{n_k}(x)\in F^{\circ}[c]$ and $f^{n_k}(y)\in F^{\circ}[c],$ for all $k\geq k_0$. This implies for all $k\geq k_0$, $\left(f^{n_k}(x),c\right)\in F^{\circ}$ and $\left(f^{n_k}(y),c\right)\in F^{\circ}$ and therefore $\left(f^{n_k}(x),f^{n_k}(y)\right)\in F^{\circ}\circ F^{\circ}\subset E^{\circ}$.
			
			\noindent Thus, in any case there exists $n\in \N$ such that 
			\begin{equation}\label{E1} 
				\left(f^n(x),f^n(y)\right)\in E^{\circ}
			\end{equation}
			Hence, $\bigcup_{n=1}^{\infty}(f\times f)^{-n}(E^{\circ})=X\times X$. But $X\times X$ is compact. Therefore there exists $k_E\in \N$ such that 
			\begin{equation}\label{E2}
				X\times X=\bigcup_{n=1}^{k_E}(f\times f)^{-n}(E^{\circ})=\bigcup_{n=1}^{k_E}(f\times f)^{-n}(E). 
			\end{equation}
			
			\noindent Next, for any entourage $E$, denote the set $\{n\in \N:(f^n(x),f^n(y))\in E\}$ by $N_E$. Then by Equation \ref{E1}, $N_E\neq \phi$. We complete the proof by showing that $N_E$ is a syndetic set. Let $m\in N_E$. Then $(f^m(x),f^m(y))\in E$. Also, there exists $j$, $1\leq j\leq k_E$ such that $(f^{m+j}(x),f^{m+j}(y))\in E$. This implies $m+j\in N_E$, for some $j$, $1\leq j\leq k_E$. Thus, the next element in $N_E$ is $m+r$, where $r=\min\{j:\left(f^{m+j}(x), f^{m+j}(y)\right)\in E,\; 1\leq j\leq k_E\}$. Hence $N_E$ is a syndetic set with bounded gap $k_E$.
		\end{proof}
		
		\noindent In the following Theorem, we show that $f$ does not have $(\mathscr{P}(\N),\f_{ps})-$shadowing under certain condition.
		\begin{theorem}\label{T3}
			Let $(X,f)$ be a dynamical system, where $X$ is a Hausdorff uniform space. If minimal points of $f$ are not dense in $X$ then $f$ does not have topological $(\mathscr{P}(\N),\f_{ps})-$shadowing.
		\end{theorem}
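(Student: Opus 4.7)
The plan is to argue by contradiction: suppose $f$ does have topological $(\mathscr{P}(\N),\f_{ps})$-shadowing. Since minimal points are not dense in $X$, the set $W=X\setminus\overline{M(f)}$ is a nonempty open set with $W\cap M(f)=\emptyset$. Fix $x\in W$. Because a Hausdorff uniform space is regular, I would first choose an open neighborhood $U$ of $x$ with $\overline{U}\subset W$, and then use Lemma \ref{L12} to pick a symmetric entourage $E$ whose interior is an entourage and for which $E[x]\subset U$; the interior condition guarantees that $E[x]$ is open in $X$, and by construction $\overline{E[x]}\subset \overline{U}\subset W$ contains no minimal points of $f$.

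The driving observation is that $\emptyset\in\mathscr{P}(\N)$, so the condition $A\subset\{i:(f(x_i),x_{i+1})\in D\}$ defining a $D$-pseudo orbit on $A$ is vacuous when $A=\emptyset$. Hence, regardless of which entourage $D$ the assumed $(\mathscr{P}(\N),\f_{ps})$-shadowing assigns to $E$, the constant sequence $\{x_i\}_{i=1}^{\infty}$ with $x_i=x$ qualifies as a $D$-pseudo orbit on $\emptyset\in\mathscr{P}(\N)$. Consequently, there exist a point $y\in X$ and a piecewise syndetic set $B\in\f_{ps}$ such that $(f^i(y),x)\in E$ for all $i\in B$; by symmetry of $E$ this gives $f^i(y)\in E[x]$ for every $i\in B$, so $N_f(y,E[x])\supseteq B$ is itself piecewise syndetic.

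To derive the contradiction I would invoke the classical fact from topological dynamics that, on a compact Hausdorff dynamical system, whenever $N_f(y,V)$ is piecewise syndetic for an open set $V$, the closure $\overline{V}$ must contain a minimal point of $f$. Applied with $V=E[x]$, this produces a minimal point inside $\overline{E[x]}\subset W$, which contradicts $W\cap M(f)=\emptyset$ and completes the argument.

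The main obstacle is justifying this last ingredient in the uniform-space setting. The standard proof proceeds via the Ellis semigroup / Stone-\v{C}ech compactification $\beta\N$: any piecewise syndetic subset of $\N$ belongs to some idempotent lying in a minimal left ideal of $(\beta\N,+)$, and the ultrafilter limit $z=p\mbox{-}\lim f^n(y)$ is then a uniformly recurrent, hence minimal, point of $X$ contained in $\overline{V}$. Since the Ellis-Numakura argument requires only compactness and Hausdorffness, it transfers verbatim to the compact Hausdorff uniform setting, and the use of Lemma \ref{L12} in the setup is precisely what guarantees that $V=E[x]$ is genuinely an open set so that this classical statement applies.
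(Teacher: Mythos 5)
Your argument is correct, but it takes a genuinely different route from the paper. The paper also starts from the empty-set trick (the constant sequence at a point $x$ away from $\overline{M(f)}$ is a pseudo orbit on $\emptyset\in\mathscr{P}(\N)$), but then it passes to the quotient $Y=X/M$ collapsing the minimal points to a single fixed point $p$, invokes Theorem \ref{L11} (a fixed point which is the unique minimal set forces syndetic proximality) together with Proposition \ref{L8} to conclude that the image of any tracing point visits every neighbourhood of $p$ along a thickly syndetic set, and hence the set of times the tracing orbit stays near $x$ (inside the complement) cannot be piecewise syndetic. You instead remain in $X$: from the tracing data you get that $N_f(y,E[x])$ is piecewise syndetic and then appeal to the classical fact that piecewise syndetic visiting times to a set force a minimal (uniformly recurrent) point in its closure, contradicting $\overline{E[x]}\cap M(f)=\emptyset$. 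Your route is shorter, avoids the quotient construction and the machinery of Sections \ref{L11}--\ref{L8}, and in fact sidesteps the paper's somewhat delicate step of forming $(Y,g)$ and asserting $\{p\}$ is its unique minimal subset; the price is importing the Ellis/$\beta\N$ theory, which is outside the paper's toolkit. Two small remarks: your parenthetical justification is imprecise --- a piecewise syndetic set need not belong to any idempotent ultrafilter (the odd numbers are syndetic yet contain no IP set), but it does belong to some ultrafilter $p$ in the smallest ideal $K(\beta\N)$, and $p\mbox{-}\lim f^n(y)$ is uniformly recurrent for every $p\in K(\beta\N)$, not only for minimal idempotents, which is all your argument needs; and the appeal to Lemma \ref{L12} is superfluous, since the ultrafilter limit lands in $\overline{\{f^n(y):n\in B\}}\subset\overline{E[x]}$ whether or not $E[x]$ is open.
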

		\begin{proof}
			Let $M=M(f)$. Then $M\neq X$ as $M(f)$ is not dense in $X$. Consider the quotient space $Y=X/M$ with the factor map $\pi:X\to Y$. Define a map $g$ on $Y$ such that $\pi \circ f=g\circ \pi$. Then, $(Y,g)$ is a dynamical system. Put $p=\pi(M)$. Then $\{p\}$ is a unique minimal subset of $Y$. Therefore by Theorem \ref{L11} $g$ is syndetically proximal. Also, there exists an entourage $E$ such that $E[p]\subset U$. Put $W=\pi^{-1}(U)$. Then choice of $U$ implies that $\overline{W}\neq X$. Therefore there exists $x\in X\setminus W$ and entourage $D$ such that $D[x]\cap W=\phi$. 
			
			\noindent If possible suppose $f$ has $(\mathscr{P}(\N),\f_{ps})-$shadowing property. Consider a constant sequence $\{x,x,\dots\}$. Then $\{x,x,\dots\}$ is a $\phi-$pseudo orbit. Therefore $\{x,x,\dots\}$ is $D-$traced by some point $z$ in $X$ on a set in $\f_{ps}$, say, $F$. This implies $F=\{n\in \N:f^n(z)\in D[x]\}$. Put $y=\pi(z)$. Then $F\subset \{n:f^n(z)\notin W\}$. But $f^m(z)\notin W$ implies $g^m(y)\notin U$. Hence, $F\subset \{n:g^n(y)\notin U\}$. Again, $g$ is syndetically proximal. Therefore by Lemma \ref{L8}, $\{n:g^n(y)\notin U\}$ is thickly syndetic set. Hence $\{n:g^n(y)\notin U\}\notin \f_{ps}$ and therefore $F\notin \f_{ps}$, which is a contradiction.
		\end{proof}

		\addcontentsline{toc}{section}{\emph{Bibliography}}
			
	\end{Large}
\end{document}